%
%
%
%

\documentclass[paper=a4, fontsize=11pt]{scrartcl} 

\usepackage[T1]{fontenc} 
\usepackage{fourier} 
\usepackage[english]{babel} 
\usepackage{amsmath,amsfonts,amsthm} 
\usepackage{amsmath, calligra, mathrsfs}
\usepackage{amssymb}
\usepackage{mathtools}
\usepackage{hyperref}
\usepackage{mathdots}
\usepackage{array}
\usepackage[mathscr]{euscript}
\usepackage{verbatim}
\usepackage[dvipsnames]{xcolor}
\usepackage{mdframed} 
\usepackage{soulutf8}
\usepackage{stmaryrd}
\usepackage{tikz-cd}
\usepackage{hyperref}
\usepackage{lipsum} 

\usepackage{sectsty} 
\allsectionsfont{\centering \normalfont\scshape} 

\usepackage{fancyhdr} 
\usepackage{xurl}
\pagestyle{fancyplain} 
\fancyhead{} 
\fancyfoot[L]{} 
\fancyfoot[C]{} 
\fancyfoot[R]{\thepage} 
\newcommand*{\sheafhom}{\mathcal{H}\kern -.5pt om}
\setlength{\headheight}{13.6pt} 

\numberwithin{equation}{section} 
\numberwithin{figure}{section} 
\numberwithin{table}{section} 

\newtheorem{thm}{Theorem}[section]

\newtheorem{prop}[thm]{Proposition}

\newtheorem{lem}[thm]{Lemma}

\newtheorem{conj}[thm]{Conjecture}

\theoremstyle{definition}
\newtheorem{defn}[thm]{Definition}

\newtheorem{exmp}[thm]{Example}

\theoremstyle{remark}
\newtheorem{rem}[thm]{Remark}

\DeclareMathOperator{\St}{st}

\DeclareMathOperator{\lk}{lk}

\DeclareMathOperator{\Ast}{ast}

\setlength\parindent{0pt} 


\newcommand{\horrule}[1]{\rule{\linewidth}{#1}} 

\title{	
	\normalfont \normalsize 
	\textsc{} \\ [25pt] 
	\horrule{0.5pt} \\[0.4cm] 
	\huge $f$-vectors of balanced simplicial complexes, flag spheres, and geometric Lefschetz decompositions
	

	\horrule{2pt} \\[0.5cm] 
}


\author{Soohyun Park \\ \href{mailto:soohyun.park@mail.huji.ac.il}{soohyun.park@mail.huji.ac.il} } 

\date{\normalsize October 11, 2024} 

\begin{document}
	
	\maketitle 
	
	
	\begin{abstract}
		\noindent We show that there are $f$-vectors of balanced simplicial complexes giving a source of simplicial complexes exhibiting a Boolean decomposition similar to a geometric Lefschetz decomposition. The objects we are working with are $h$-vectors of flag spheres and balanced simplicial complexes whose $f$-vectors are equal to them. This builds on work of Nevo--Petersen--Tenner on a conjecture of Nevo--Petersen that the gamma vector of an odd-dimensional flag sphere is the $f$-vector of a balanced simplicial complex (which was shown for barycentric subdivisions by Nevo--Petersen--Tenner). We can connect our decomposition to positivity questions on reciprocal/palindromic polynomials associated to flag spheres and geometric questions motivating them. In addition, we note that the degrees in the Lefschetz-like decomposition are not halved unlike the usual $h$-vector setting.
		
	\end{abstract}
	
	\section*{Introduction} 
	
	The objects which we will discuss are combinatorial objects with $f$-vectors that seem to have some kind of geometric structure. They come from flag spheres, which are simplicial spheres with special geometric properties. For example, they are both involved in questions about a special case of the Hopf conjecture for certain piecewise Euclidean manifolds in the sense of Gromov's CAT(0) inequality (\cite{CD}, Section 4.2.C on p. 122 of \cite{Gro}) and postivity properties of reciprocal/palindromic polynomials lying between unimodality and real-rootedness observed in many combinatorial settings \cite{Athgam}. \\

	The geometric question on nonpositive curvature can be translated to a combinatorial form using flag complexes, which are defined below.
	
	\begin{defn} (p. 270 of \cite{Gal}) \\
		A simplicial complex is \textbf{flag} if every minimal nonface has size 2. In other words, a subset $A \subset V(\Delta)$ of the vertex set $V(\Delta)$ is a face of $\Delta$ if and only if $e \in A$ for every 2-element subset $e \subset A$. Equivalently, it is the clique complex of a graph. \\
	\end{defn}
	
	\begin{conj} (Charney--Davis, Conjecture D on p. 118 and p. 127 of \cite{CD}) \label{cdconj} \\
		Let $d$ be an even number and $K$ be a $(d - 1)$-dimensional flag sphere. Then, we have that \[ (-1)^{ \frac{d}{2} } \sum_i \left( -\frac{1}{2} \right)^{i + 1} f_i \ge 0. \] Equivalently, we have that \[ (-1)^{ \frac{d}{2} } \sum_{i = 0}^d (-1)^i h_i \ge 0.  \]
	\end{conj}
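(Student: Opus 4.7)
The plan is to verify the equivalence of the two displayed inequalities by a direct generating-function computation using the standard $f$- to $h$-vector transformation. Recall that for a $(d-1)$-dimensional simplicial complex with $f$-vector $(f_{-1}, f_0, \ldots, f_{d-1})$ (with $f_{-1} = 1$), the $h$-polynomial $h(t) := \sum_{i=0}^d h_i t^i$ satisfies
\[ h(t) = \sum_{i=0}^d f_{i-1} \, t^i \, (1-t)^{d-i}. \]

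The key step is to evaluate $h(-1) = \sum_{i=0}^d (-1)^i h_i$. Substituting $t = -1$ into the above identity, using $1 - (-1) = 2$, and then re-indexing with $j = i - 1$ yields
\[ h(-1) = \sum_{i=0}^d f_{i-1} (-1)^i 2^{d-i} = 2^d \sum_{j=-1}^{d-1} f_j \left(-\tfrac{1}{2}\right)^{j+1}. \]
Since $2^d > 0$, multiplying through by the sign $(-1)^{d/2}$ shows that the two sums agree in sign (in fact, one is a positive scalar multiple of the other). This is the desired equivalence.

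The only real obstacle is bookkeeping: one must be careful to track the index conventions (whether the outer summation in the first inequality begins at $i = -1$ with $f_{-1} = 1$, and whether the upper endpoint is $d - 1$ or $d$) and to extract the factor $2^d$ cleanly. No combinatorial or topological input about flag spheres enters into the equivalence itself; the genuinely hard assertion $(-1)^{d/2} h(-1) \ge 0$ for $(d-1)$-dimensional flag spheres is the actual Charney--Davis conjecture, which remains open in general and is the positivity statement motivating the $h$-vector and $\gamma$-vector refinements discussed in the sequel.
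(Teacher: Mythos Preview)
The statement you are addressing is a \emph{conjecture}, and the paper does not supply a proof of it; it is recorded as background and motivation only. You have correctly recognized this and instead verified the word ``Equivalently'' linking the two displayed inequalities. Your generating-function computation is correct: substituting $t=-1$ into $h(t)=\sum_{i=0}^d f_{i-1}t^i(1-t)^{d-i}$ and factoring out $2^d$ gives exactly the claimed identity, so the two sums differ by the positive scalar $2^d$ and share the same sign. The paper itself simply asserts the equivalence without writing out this calculation, so there is nothing further to compare; your proposal fills in a routine detail the paper leaves implicit, and your closing remark that the actual inequality remains open is accurate.
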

	
	This is a special case of a conjecture involving the following invariant, which as originally considered in the context of real-rootedness of $h$-polynomials of flag spheres (some geometric motivation on p. 277 of \cite{Gal}). \\
	
	\begin{defn} (p. 272 of \cite{Gal}) \\
		The \textbf{gamma vector} of a reciprocal/palindromic polynomial $h(x)$ is the unique vector $\gamma = (\gamma_0, \ldots, \gamma_{ \frac{d}{2} })$ such that \[ h(x) = \sum_{i = 0}^{ \lfloor \frac{d}{2} \rfloor } \gamma_i x^i (x + 1)^{d - 2i}. \]
		
		In practice, we will take $d$ to be even since we're studying odd-dimensional flag spheres. \\
	\end{defn}
	
	\begin{conj} (Gal, Conjecture 2.1.7 on p. 272 of \cite{Gal}) \\
		The gamma vector of a flag sphere is nonnegative. The nonnegativity of the index $\frac{d}{2}$ component is Conjecture \ref{cdconj}. \\
	\end{conj}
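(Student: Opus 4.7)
The plan is to construct, for every $(d-1)$-dimensional flag sphere $K$ with $d$ even, a balanced simplicial complex $\Delta(K)$ whose $f$-vector equals the gamma vector $\gamma(K) = (\gamma_0, \ldots, \gamma_{d/2})$. Since $f$-vectors of nonempty simplicial complexes are automatically nonnegative entrywise, this would establish the conjecture. The approach is motivated by the Nevo--Petersen--Tenner result for barycentric subdivisions and by the Lefschetz-like Boolean decomposition described in the abstract of this paper.

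First, I would invert the defining expansion $h(x) = \sum_{i=0}^{d/2} \gamma_i x^i (x+1)^{d-2i}$ to express each $\gamma_i$ as an explicit alternating sum in the $h_j$'s, and then reinterpret $h_j$ using the flag $h$-vector identities so that the signed sum collapses into a positive count of certain chains or subcomplexes of $K$. This is the step where the flag hypothesis must be used essentially; the identity itself holds for any reciprocal polynomial, but only the flag property has a chance of forcing the cancellations to be purely combinatorial. Second, I would organize these chains into the faces of a candidate balanced complex $\Delta(K)$, with the balanced coloring inherited from the stratification implicit in the degree index $i$ of $\gamma_i$, mirroring the barycentric-subdivision coloring used by Nevo--Petersen--Tenner.

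The final step is to promote this counting identity to a Boolean/Lefschetz-like decomposition: one should exhibit a family of Boolean intervals inside the face poset of $\Delta(K)$ whose top/bottom rank pairs biject with the geometric Lefschetz pieces, yielding not only nonnegativity of each $\gamma_i$ but also the positivity statements on the reciprocal polynomial associated to $K$. The remark in the abstract that the degrees are \emph{not} halved suggests writing the decomposition with respect to intervals of rank $d-2i$ (matching the factor $(x+1)^{d-2i}$) rather than $\lfloor d/2 \rfloor - i$, which is the combinatorial shadow of the asymmetry between the $h$-vector and gamma-vector viewpoints.

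The main obstacle is of course that Gal's conjecture is open in general, so the signed-sum step cannot work at the level of arbitrary flag spheres without additional input. I expect the hard part to be showing that the cancellations really are combinatorial, i.e., that the candidate complex $\Delta(K)$ exists beyond the classes (such as barycentric subdivisions, or spheres with a suitable convex-geometric or Koszul-type structure) where a Lefschetz operator is already available. A realistic intermediate target is to carry out the construction for all flag spheres that admit a sufficiently rich $\gamma$-positive cell structure, and to phrase the obstruction for the general case as the existence of the Boolean decomposition itself, which is the bridge the paper is building.
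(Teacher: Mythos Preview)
The statement you are attempting is recorded in the paper as a \emph{conjecture} (Gal's conjecture), not as a theorem with its own proof. The paper does not give a standalone argument for it; rather, nonnegativity would follow as a corollary of the paper's main theorem, which asserts that the gamma vector of a $(d-1)$-dimensional flag sphere is the $f$-vector of a simplicial complex of dimension $\le \frac{d}{2}-1$ whose $1$-skeleton is $d$-colorable. So the relevant comparison is between your outline and the paper's argument for \emph{that} theorem.

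Your proposal is a research plan rather than a proof, and you say as much: you correctly flag that the alternating-sum inversion of $\gamma_i$ in terms of the $h_j$ has no known purely combinatorial cancellation mechanism for arbitrary flag spheres, and you defer to ``additional input.'' That is precisely the gap. The paper does not attempt to control those alternating sums at all. Instead it works one level up: it takes a balanced simplicial complex $\Gamma$ with $f(\Gamma)=h(\Delta)$ (existence from Caviglia--Constantinescu--Varbaro and Bj\"orner--Frankl--Stanley), sets $[d]$ to be the vertex set of the unique facet of $\Gamma$, defines $S$ as the subcomplex of $\Gamma$ on vertices disjoint from $[d]$, and then argues by induction on $\dim\Delta=2\ell-1$ that $\Gamma$ admits the Boolean decomposition $\Gamma=\{F\cup G: F\in S,\ G\in 2^{[d-2|F|]}\}$, from which $\gamma(\Delta)=f(S)$ by Proposition~\ref{gamdec}. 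The inductive engine is Adiprasito's partition-of-unity lemma (Lemma~\ref{partun}), which lets the paper pass from $\Delta$ to the links $\lk_\Delta(e)$ over edges (again flag spheres, of dimension $d-3$) and assemble the Boolean decompositions of the corresponding $\Gamma_e$ into one for $\Gamma$.

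So the key idea you are missing is that the paper \emph{never} touches the signed expansion of $\gamma_i$; it bypasses cancellation entirely by producing the complex $S$ structurally, via the unique-facet trick plus edge-link induction. Your ``first step'' of inverting and hoping the flag hypothesis forces combinatorial cancellation is exactly the route the paper avoids, and it is the reason your outline stalls at the known open boundary.
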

	
	There has been a lot interesting work on this invariant from many different perspectives including early work of Foata--Sch\"utzenberger on permutation statistics and signatures of toric varieties in work of Leung--Reiner. A survey in a combinatorial context is given by Athanasiadis (see p. \cite{Athgam}) and an algebraic interpretation of the top component as the signature of a certain toric variety under certain conditions is given by Leung--Reiner \cite{LR}. \\
	
	The specific question we will study comes from work of Nevo--Petersen \cite{NP} and Nevo--Petersen--Tenner \cite{NPT} which suggests that the gamma vector is not only nonnegative but also the $f$-vector of some simplicial complex. \\
	
	\begin{defn} (Definition 4.1 on p. 95 of \cite{St}) \\
		Let $\Delta$ be a $(d - 1)$-dimensional simplicial complex on the vertex set $V$. We say that $\Delta$ is \textbf{balanced} if there is a map $\kappa : V \longrightarrow [d]$ such that if $\{ x, y \}$ is an edge of $\Delta$, then $\kappa(x) \ne \kappa(y)$. \\
		
		Equivalently, if we think of $\kappa$ as a \textbf{coloring} of $V$ with the colors $1, \ldots, d$, then every face of $\Delta$ has all its vertices colored differently. \\
	\end{defn}
	
	\begin{conj} (Nevo--Petersen, Nevo--Petersen--Tenner on p. 1365 of \cite{NPT}, Conjecture 1.4 on p. 504 of \cite{NP}) \\
		The gamma vector of a flag simplicial sphere is the $f$-vector of some balanced simplicial complex.
	\end{conj}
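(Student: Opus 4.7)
The plan is to establish the conjecture by exhibiting, for a flag $(d-1)$-sphere $K$ with $d$ even, a Boolean Lefschetz-style decomposition of a balanced realization of $h(K)$ whose interval minima form the sought balanced complex with $f$-vector $\gamma(K)$. I would first produce a balanced $(d-1)$-dimensional complex $\Delta$ together with a proper coloring $\kappa : V(\Delta) \to [d]$ such that $f_j(\Delta) = h_j(K)$ for all $j$. For barycentric subdivisions this is the Nevo--Petersen--Tenner construction; for a general flag $K$ one takes the order complex of a derived poset attached to $K$, with flag-ness ensuring the needed compatibility with the coloring.

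Next I would exhibit a partition $\Delta = \bigsqcup_\alpha [\sigma_\alpha, \tau_\alpha]$ into Boolean intervals with exactly $\gamma_i$ intervals of type $|\sigma_\alpha| = i$, $|\tau_\alpha| = d - i$. The identity
\[
f_j(\Delta) = \sum_\alpha \binom{|\tau_\alpha \setminus \sigma_\alpha|}{j - |\sigma_\alpha|} = \sum_i \gamma_i \binom{d - 2i}{j - i} = h_j(K)
\]
fixes the type multiplicities; the substantive content is to arrange the partition so that the minima $\{\sigma_\alpha\}$ are distinct and closed under taking subfaces, hence form a subcomplex $\Gamma \subseteq \Delta$. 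I would obtain this by taking the decomposition to come from a shelling of $\Delta$ compatible with the clique structure of $K$, so that restriction faces of the shelling serve as the interval minima. Then $\Gamma$ inherits a proper vertex coloring from $\kappa$; to see $\Gamma$ as balanced in the strict sense as a $(d/2 - 1)$-dimensional complex, I would use the palindromy $h_i = h_{d-i}$ to build an involutive matching on intervals of dual types that pairs color classes $\{j, d + 1 - j\}$, and collapse each pair to a single color on $\Gamma$ to obtain a $d/2$-coloring. Counting then gives $f_{i-1}(\Gamma) = \gamma_i$, so $f(\Gamma) = \gamma(K)$.

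The main obstacle is the production of a Boolean decomposition whose minima are subface-closed. This is a genuine rigidity constraint beyond ordinary partitionability of $\Delta$ (where the minima form only an unstructured multiset), and it is precisely here that flag-ness of $K$ must enter, presumably through a shelling or convex-ear decomposition keyed to the clique complex structure of the underlying graph; without such compatibility the argument collapses to the known partitionability picture, which does not by itself give Gal's nonnegativity of $\gamma$. The compression of the coloring from $d$ to $d/2$ colors is secondary but nontrivial; absent a global palindromy-induced involution one may instead have to argue color-class by color-class, matching each $\gamma_i$ contribution to a specific pair of dual colors via inclusion--exclusion across the Boolean intervals.
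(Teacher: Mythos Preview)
The statement you are attempting to prove is listed in the paper as a \emph{conjecture}, not a theorem; the paper does not claim to prove it. What the paper actually establishes is the weaker result that the gamma vector of a $(d-1)$-dimensional flag sphere (with $d$ even) is the $f$-vector of a simplicial complex of dimension $\le \frac{d}{2}-1$ whose $1$-skeleton is properly colorable with at most $d$ colors --- note $d$ colors, not $\frac{d}{2}$. So your target is strictly stronger than anything in the paper, and the full Nevo--Petersen conjecture remains open there.

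Your proposal has two substantive gaps that you yourself flag but do not close. First, the existence of a Boolean interval partition of the balanced realization $\Delta$ whose minima are closed under taking subfaces is asserted to come from ``a shelling of $\Delta$ compatible with the clique structure of $K$,'' but no such shelling is constructed, and it is unclear why flagness of $K$ (a property of the original sphere, not of the balanced complex $\Delta$ produced by the Caviglia--Constantinescu--Varbaro and Bj\"orner--Frankl--Stanley machinery) should yield one. The paper's route to its weaker theorem sidesteps this entirely: it fixes the unique facet of $\Gamma$ as the Boolean vertex set $[d]$, takes $S$ to be the subcomplex of $\Gamma$ on vertices disjoint from $[d]$, and runs an induction on dimension via Adiprasito's partition-of-unity lemma, assembling the Boolean decompositions of the $\Gamma_e$ attached to edge links $\lk_\Delta(e)$. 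This never produces a $\frac{d}{2}$-coloring of $S$, which is exactly why the paper stops at $d$-colorability. Second, your compression of the $d$-coloring to a $\frac{d}{2}$-coloring via a ``palindromy-induced involution'' pairing colors $\{j, d+1-j\}$ is only a wish: palindromy of $h(K)$ is a numerical symmetry and does not by itself furnish an involution on the face poset of $\Gamma$ compatible with the coloring. Without that step you recover at best the paper's weaker theorem, not the conjecture.
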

	
	\begin{thm} (Nevo--Petersen--Tenner, Theorem 1.1 on p. 1365 of \cite{NPT}) \\
		If $\Delta$ is the barycentric subdivision of a simplicial complex, then $\gamma(\Delta)$ is the $f$-vector of a balanced simplicial complex.
	\end{thm}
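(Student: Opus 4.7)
The plan is to reduce the problem to $\gamma$-positivity of Eulerian polynomials --- which arise as $h$-polynomials of barycentric subdivisions of simplex boundaries --- and then assemble the resulting combinatorial data into a balanced simplicial complex. The first input is the classical Brenti--Welker type formula that expresses $h(\sd(\Gamma), x)$ as a nonnegative combination of Eulerian polynomials $A_j(x)$ with coefficients built out of the face numbers (or equivalently the $h$-vector) of $\Gamma$. Such a decomposition transfers the question of $\gamma$-positivity for $h(\sd(\Gamma), x)$ to $\gamma$-positivity for the $A_j(x)$ and, more usefully, transfers combinatorial models for $\gamma(A_j)$ to combinatorial models for $\gamma(\sd(\Gamma))$.

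The second input is the Foata--Sch\"utzenberger theorem (re-proved by Foata--Strehl via their valley-hopping action), which says that $A_j(x)$ is $\gamma$-positive and $\gamma_i(A_j)$ enumerates permutations of $[j]$ having $i$ descents and no two consecutive descents. Combined with the first step, this expresses each $\gamma_i(\sd(\Gamma))$ as an explicit count of pairs (face of $\Gamma$, restricted-descent permutation on that face).

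Then I would construct a balanced simplicial complex $\Delta'$ realizing this $f$-vector. Its vertices would be indexed by pairs $(v, \ell)$ where $v$ is a vertex of $\sd(\Gamma)$ and $\ell$ is an auxiliary label drawn from an interval whose length depends on the dimension of the face of $\Gamma$ represented by $v$; faces of $\Delta'$ would correspond to chains of such vertices together with compatible label data, chosen so that the face counts at each cardinality reproduce $\gamma_i(\sd(\Gamma))$. A proper coloring would then be built by combining the dimension of $v$ with the position of the vertex inside the chain, so that distinct vertices in any face receive distinct colors.

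The hard part will be this final construction. Producing combinatorial objects of the right cardinalities is easy, but assembling them into an honest simplicial complex (closed under taking subsets) that simultaneously admits a proper coloring with only $\lfloor d/2 \rfloor$ colors will likely require passing through an intermediate combinatorial model --- for instance, a complex whose faces encode the valley structure of Foata--Strehl orbits, or a complex of restricted colored words on a suitable alphabet --- so that subset-closure is automatic (given by subword containment) and balance is forced by the coloring inherent in the construction.
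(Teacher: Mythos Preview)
This statement is not proved in the paper. It appears in the Introduction as a \emph{cited} result of Nevo--Petersen--Tenner (Theorem 1.1 of \cite{NPT}), listed alongside the Nevo--Petersen conjecture purely as motivation for the paper's own work on Boolean decompositions. The paper never revisits barycentric subdivisions and contains no argument, sketch, or alternative proof of this theorem; its main contribution concerns a different (and weaker) statement about $\gamma$-vectors of arbitrary odd-dimensional flag spheres. So there is no ``paper's own proof'' to compare your proposal against.

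For what it is worth, your outline is close in spirit to the actual Nevo--Petersen--Tenner argument in \cite{NPT}: they, too, pass through the Brenti--Welker expansion of $h(\sd(\Gamma),x)$ in terms of Eulerian polynomials and the Foata--Strehl/Foata--Sch\"utzenberger model for $\gamma(A_j)$, and then build an explicit balanced simplicial complex (a ``colored Boolean decomposition'' complex on pairs of the form you describe) whose $f$-vector matches. Your caveat about the last step is apt: the delicate point in \cite{NPT} is precisely checking that the candidate family of faces is downward closed and admits the required $\lfloor d/2\rfloor$-coloring, and they handle this by a careful choice of combinatorial model rather than anything in the present paper.
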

	
	Instead of the gamma vector itself, we will study properties of simplicial complexes $\Gamma$ such that $f(\Gamma) = h(\Delta)$ for a given flag sphere $\Delta$. This is motivated by the following result of Nevo--Petersen--Tenner \cite{NPT}:
	
	\begin{prop} (Nevo--Petersen--Tenner, proof of Proposition 6.2 on p. 1377 of \cite{NPT}) \\
		Let $\Delta$ be a flag sphere and $\Gamma$ be a simplicial complex such that $h(\Delta) = f(\Gamma)$ (which we can take to be balanced). If there is a $\left( \frac{d}{2} - 1 \right)$-dimensional simplicial complex $S$ of dimension $\le \frac{d}{2} - 1$ such that \[ \Gamma =   \{ F \cup G : F \in S, G \in 2^{[d - 2|F|]} \} \] with $[d]$ a vertex set disjoint from $S$, then $\gamma(\Delta) = f(S)$. \\
	\end{prop}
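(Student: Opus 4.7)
The approach is a direct generating-function computation, exploiting the fact that the prescribed decomposition of $\Gamma$ is tailor-made to reproduce the basis $\{x^i(1+x)^{d-2i}\}$ in which the gamma vector is defined.

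First I would verify that each face $\sigma \in \Gamma$ admits a \emph{unique} decomposition $\sigma = F \cup G$ with $F \in S$ and $G \subseteq [d-2|F|]$, using the disjointness of the vertex sets $V(S)$ and $[d]$: the $S$-part is recovered as $F = \sigma \cap V(S)$ and the remainder as $G = \sigma \cap [d]$. Then, grouping faces of $\Gamma$ by the size $|F| = i$ of their $S$-part and by the size $|G| = j$ of the attached subset of $[d - 2i]$, and using the convention $f(\Gamma, x) = \sum_k f_{k-1}(\Gamma)\, x^k$ with $f_{-1}(\Gamma) = 1$, I get
\begin{equation*}
f(\Gamma, x) \;=\; \sum_{F \in S} x^{|F|}(1+x)^{d - 2|F|} \;=\; \sum_{i \ge 0} f_{i-1}(S)\, x^i (1+x)^{d-2i}.
\end{equation*}

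Next I would use the hypothesis $f(\Gamma) = h(\Delta)$ componentwise to identify polynomials, $f(\Gamma, x) = h(\Delta, x)$, and compare with the definition $h(\Delta, x) = \sum_{i} \gamma_i(\Delta)\, x^i (1+x)^{d-2i}$. Since the polynomials $x^i (1+x)^{d-2i}$ for $i = 0, 1, \ldots, d/2$ have pairwise distinct degrees $d - i$, they form a linearly independent family over $\mathbb{Q}$. Reading off coefficients in this basis yields $\gamma_i(\Delta) = f_{i-1}(S)$ for all $i$, i.e., $\gamma(\Delta) = f(S)$.

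The only potential obstacles are bookkeeping: one has to confirm that each face of $\Gamma$ is produced exactly once by the decomposition (settled by disjointness of $V(S)$ and $[d]$), and one has to keep the conventions for $f$ and $h$ polynomials aligned so that the empty face $F = \emptyset \in S$ contributes the full $(1+x)^d$ summand corresponding to $\gamma_0$. No geometric input on $\Delta$ beyond the flag-sphere setting is actually used in the proof; the content is purely the linear-algebraic matching of two expansions against the common basis $\{x^i(1+x)^{d-2i}\}$.
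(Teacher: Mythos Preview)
Your argument is correct and matches the paper's own treatment: the paper (in the Remark immediately following the proposition) likewise uses disjointness of $V(S)$ from $[d]$ to get a unique splitting of each face, expands $f_{i-1}(\Gamma)=h_i(\Delta)$ against the basis $\{x^j(1+x)^{d-2j}\}$ of palindromic polynomials, and concludes that the coefficients must be the $\gamma_j$. Your generating-function packaging and the observation that the $x^i(1+x)^{d-2i}$ have distinct degrees $d-i$ (hence are independent) are slightly cleaner than the paper's phrasing via an invertible linear transformation on coefficients, but the content is identical.
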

	
	Looking in this direction, we took a closer look at simplicial complexes $\Gamma$ such that $f(\Gamma) = h(\Delta)$ when $\Delta$ is a flag sphere and how they are constructed. Combining work of Caviglia--Constantinescu--Varbaro (Corollary 2.3 on p. 474 of \cite{CCV}) and Bj\"orner--Frankl--Stanley ($a = (1, \ldots, 1)$ case of Theorem 1 on p. 23, p. 30 -- 32, and Section 6.4 on p. 33 of \cite{BFS}), we see that $\Gamma$ can be taken to be a balanced simplicial complex (i.e. a $(d - 1)$-simplicial complex with a proper $d$-coloring of its 1-skeleton). Given a balanced simplicial complex $\Gamma$, we note that filling in colors that are missing from each face of $\Gamma$ can be used to construct a balanced shellable simplicial complex $\Delta$ with the an $h$-vector which is equal to the $f$-vector given balanced simplicial complex (p. 30 -- 32 \cite{BFS}). We note that the Boolean decomposition suggested by Nevo--Petersen--Tenner sort of looks similar to an ``incomplete'' completion of the unused colors with the vertex set $[d]$ denoting colors used. \\
	
	Going in this direction, we can show the following:
	
	\begin{thm}
		Given an even number $d$, the gamma vector of a $(d - 1)$-dimensional flag sphere is the $f$-vector of some simplicial complex of dimension $\le \frac{d}{2} - 1$ whose 1-skeleton has a proper coloring by $\le d$ colors. \\
	\end{thm}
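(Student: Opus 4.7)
The plan is to build $S$ as a subcomplex of a balanced complex $\Gamma$ of the kind described just before the theorem statement. By the combined results of Caviglia--Constantinescu--Varbaro and Bj\"orner--Frankl--Stanley, there is a balanced $(d-1)$-dimensional complex $\Gamma$ with $f(\Gamma)=h(\Delta)$ and a proper coloring $\kappa\colon V(\Gamma)\to[d]$. Taking $S$ as a subcomplex of $\Gamma$ forces the $\le d$-color condition on the $1$-skeleton of $S$ by restriction of $\kappa$, so it remains to prove the dimension bound and the identity $f(S)=\gamma(\Delta)$.

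To force $f(S)=\gamma(\Delta)$, I would exploit the identity
\[ h_k(\Delta)=\sum_{i=0}^{\lfloor d/2\rfloor}\gamma_i(\Delta)\binom{d-2i}{k-i}, \]
which comes from $h(x)=\sum_i\gamma_i\,x^i(x+1)^{d-2i}$. Modelled on the Nevo--Petersen--Tenner proposition recalled above, the plan is to assign to each face $T\in\Gamma$ a canonical ``core'' sub-face $F(T)\subseteq T$ in such a way that the complements $T\setminus F(T)$, viewed as subsets of a distinguished $(d-2|F(T)|)$-element color subset of $[d]$, contribute exactly the binomial factor $\binom{d-2|F(T)|}{|T|-|F(T)|}$ to $f_{|T|-1}(\Gamma)$. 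Setting $S=\{F(T):T\in\Gamma\}$, this matching converts the identity above termwise into $f_{i-1}(S)=\gamma_i(\Delta)$.

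The two main obstacles are defining the assignment $T\mapsto F(T)$ coherently and controlling its size. For coherence, the rule must satisfy $F(T')\subseteq F(T)$ whenever $T'\subseteq T$, so that $S$ is genuinely a simplicial subcomplex of $\Gamma$; a natural candidate is to use the shelling-style construction of $\Gamma$ from Bj\"orner--Frankl--Stanley, where each face has a minimal restriction in the shelling order that can serve as $F(T)$. For the size bound, one needs $|F(T)|\le d/2$ uniformly in $T$ in order to conclude $\dim S\le d/2-1$; I expect the flag condition on $\Delta$ to enter precisely here, via the palindromy $h_i(\Delta)=h_{d-i}(\Delta)$ and the strong structural constraints imposed by having no minimal non-face of size $\ge 3$. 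Verifying this uniform dimension bound is the step I anticipate as the principal technical difficulty, since it is effectively what upgrades the raw statement $f(\Gamma)=h(\Delta)$ into the sharper Lefschetz-type decomposition advertised in the abstract.
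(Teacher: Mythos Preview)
Your high-level framework matches the paper's: both begin with a balanced $(d-1)$-complex $\Gamma$ with $f(\Gamma)=h(\Delta)$, then seek a subcomplex $S\subset\Gamma$ and a Boolean decomposition $\Gamma=\{F\cup G:F\in S,\ G\in 2^{[d-2|F|]}\}$ so that Proposition~\ref{gamdec} gives $\gamma(\Delta)=f(S)$. Restricting the coloring $\kappa$ to $S$ is also how the paper obtains the $\le d$-colorability.

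Where the routes diverge is in the actual construction of $S$ and the control of $\dim S$. The paper does \emph{not} use shelling restrictions of $\Gamma$. Instead it makes a concrete choice: since $f_{d-1}(\Gamma)=h_d(\Delta)=1$, the complex $\Gamma$ has a unique facet, whose vertex set is declared to be $[d]$; then $S$ is defined as the subcomplex of $\Gamma$ on the vertices \emph{disjoint} from $[d]$. The core map is simply $F(T)=T\setminus[d]$, which is automatically monotone. The whole weight of the argument then goes into proving (i) that every face of $\Gamma$ is of the form $F\cup G$ with $|G|\le d-2|F|$, and (ii) $\dim S\le \tfrac{d}{2}-1$. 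This is done by induction on $\ell$ where $d=2\ell$: for each edge $e\in\Delta$ the link $\lk_\Delta(e)$ is a flag sphere of dimension $d-3$, so by induction there is a balanced $\Gamma_e$ with $f(\Gamma_e)=h(\lk_\Delta(e))$ admitting the analogous decomposition with $\dim S_e\le\tfrac{d}{2}-2$. These local pieces are glued via Adiprasito's partition of unity lemma (Lemma~\ref{partun}), which embeds $A^k(\Delta)$ (and correspondingly $B^k(\Gamma)$) into $\bigoplus_e A^k(\lk_\Delta(e))$ for $k\le d-2$; the remaining degrees $d-1,d$ are handled directly using the unique facet. Thus flagness enters not through palindromy alone but structurally, through the fact that edge-links of flag spheres are again flag spheres, enabling the induction.

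Your proposal, by contrast, leaves the definition of $F(T)$ open and gestures toward shelling restrictions. There is a genuine gap here: shelling restrictions are attached to facets, and the obvious extensions to arbitrary faces need not be monotone under inclusion, nor is there any evident reason their sizes are bounded by $d/2$ --- the palindromy $h_i=h_{d-i}$ is far from sufficient, since it holds for all simplicial spheres while Gal's conjecture fails without flagness. Without an inductive mechanism along the lines of the edge-link/partition-of-unity argument, the dimension bound you flag as the principal difficulty remains entirely unaddressed.
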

	
	Our approach starts by looking at how the interpretation above works for 1-dimensional flag spheres (i.e. $n$-gons for $n \ge 4$). This made us set $[d]$ to be the vertex set of the unique facet of $\Gamma$ and $S$ to be the subcomplex of $\Gamma$ with vertices disjoint from $[d]$. Then, we use an induction on $\ell$ such $\dim \Delta = 2\ell - 1$ using a partition of unity argument involved in Adiprasito's proof of the $g$-conjecture (Lemma 3.4 on p. 14 of \cite{Adip}) to construct a Boolean decomposition of $\Gamma$ out of $\Gamma_e$ such that $f(\Gamma_e) = h(\lk_\Delta(e))$ (noting that $h(\lk_\Delta(e))$ is a flag sphere of dimension $d - 3$) and show it is compatible with the definition we made for the Boolean decomposition at the beginning. Note that this involves thinking about analogues of structure that occur in the 1-dimensional case. \\
	
	Finally, we note that the Boolean decomposition suggested by Nevo--Petersen--Tenner looks similar to the Lefschetz decomposition for compact K\"ahler manifolds and discuss connections between the balanced simplicial complexes we study and combinatorial examples in the literature (e.g. work of Kim--Rhoades \cite{KR}) on relationships between Lefschetz properties and Boolean lattices. Finally, we note that the usual coloring linear system of parameters for balanced simplicial complexes can be interpreted to have an exterior algebra-like structure in a certain sense. Since the degrees are \emph{not} halved unlike many combinatorial examples in the literature, it would be interesting to find a geometric source of the patterns we observed. \\
	
	
	\section*{Acknowledgments} 
	I would like to thank Karim Adiprasito for introducing me to this topic as part of a former group project.

	\section{Background} 
	
	Before entering the main content we will discuss, we mention some definitions and results which could be used in later sections. \\
	

	\begin{defn}
		A \textbf{reciprocal/palindromic} polynomial $h(t) = h_0 + h_1 t + \ldots + h_{d - 1} t^{d - 1} + h_d t^d$ of degree $d$ is one where $h_k = h_{d - k}$ for all $0 \le k \le \frac{d}{2}$. \\
	\end{defn}
	
	An important example from geometric combinatorics comes from $h$-vectors of boundaries of simplicial polytopes (p. 67 of \cite{St}). In our setting, we will look at generalized homology spheres (p. 271 of \cite{Gal}). \\

	
	In particular, the ones that we are interesting are \emph{flag} simplicial complexes. \\
	
	\begin{defn} (p. 270 of \cite{Gal}) \\
		A simplicial complex $\Delta$ is \textbf{flag} if every minimal nonface has size 2. In other words, a subset $A \subset V(\Delta)$ of the vertex set $V(\Delta)$ is a face of $\Delta$ if and only if $e \in A$ for every 2-element subset $e \subset A$. Equivalently, it is the clique complex of a graph. \\
	\end{defn}
	
	In addition to the combinatorial setting that we're working in, the flag property has been studied in the context of nonpositive curvature of certain cubical complexes in the sense of Gromov's CAT(0) inequality (Conjecture D on p. 118 and 127 of \cite{CD}, an alternate form on p. 135 of \cite{CD}). \\
	
	
	
	Next, we mention constructions and properties of flag spheres (and flag simplicial complexes in general) which will be used for studying local-global properties and inductive arguments involving the dimension. \\
	

	\begin{defn} (p. 60 of \cite{St}, Definition 5.3.4 on p. 232 of \cite{BH})
		Given a simplicial complex $\Delta$, its \textbf{link} over a face $F \in \Delta$ is the subcomplex \[ \lk_\Delta(F) \coloneq \{ G \in \Delta : F \cup G \in \Delta, F \cap G = \emptyset \}. \] 
		
		These are the faces of $\Delta$ formed by removing $F$ from faces of $\Delta$ containing $F$. \\
	\end{defn}
	
	We first list a special local property of flag spheres before listing the codimension 2 subcomplexes which will be used for inductive arguments. \\
	
	\color{black}
	
	
	\begin{prop}
		Given a flag simplicial complex $\Delta$, we have that $\lk_\Delta(p) \cap \lk_\Delta(q) = \lk_\Delta(p, q)$ if $(p, q) \in \Delta$ is an edge of $\Delta$. More generally, we have that $\lk_\Delta(P) \cap \lk_\Delta(Q) = \lk_\Delta(P \cup Q)$ if $P \cup Q \in \Delta$. Note that the inclusion $\lk_\Delta(p) \cap \lk_\Delta(q) \subset \lk_\Delta(p, q)$ does not hold for arbitrary simplicial complexes.
	\end{prop}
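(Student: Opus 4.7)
The plan is to prove the general equality $\lk_\Delta(P) \cap \lk_\Delta(Q) = \lk_\Delta(P \cup Q)$ directly under the hypothesis $P \cup Q \in \Delta$; the statement for an edge $(p,q)$ is the special case $P = \{p\}$, $Q = \{q\}$.

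First I would dispatch the inclusion $\lk_\Delta(P \cup Q) \subseteq \lk_\Delta(P) \cap \lk_\Delta(Q)$, which holds for every simplicial complex and is a straightforward unpacking of definitions. If $G$ is disjoint from $P \cup Q$ with $G \cup P \cup Q \in \Delta$, then $G$ is disjoint from $P$ and from $Q$ individually, and each of $G \cup P$ and $G \cup Q$ is a subset of the face $G \cup P \cup Q$, hence is itself in $\Delta$ by the hereditary property.

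For the reverse inclusion, let $G \in \lk_\Delta(P) \cap \lk_\Delta(Q)$, so that $G \cup P, G \cup Q, P \cup Q \in \Delta$ and $G$ is disjoint from both $P$ and $Q$, hence from $P \cup Q$. The only nontrivial content is to check $G \cup P \cup Q \in \Delta$, and this is where the flag hypothesis enters: by flagness it suffices to verify that every 2-element subset $\{x,y\} \subseteq G \cup P \cup Q$ lies in $\Delta$. A short case analysis on whether each of $x,y$ lies in $G$, $P$, or $Q$ shows that any such pair is contained in one of the three faces $G \cup P$, $G \cup Q$, or $P \cup Q$, all of which are in $\Delta$; the flag property then promotes this edgewise check to $G \cup P \cup Q \in \Delta$, completing the argument.

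The only real obstacle is the case analysis in the reverse direction, but the three faces $G \cup P$, $G \cup Q$, $P \cup Q$ jointly cover every pair of vertices in $G \cup P \cup Q$, so there is nothing to worry about. For the final remark about general (non-flag) complexes, I would record the standard witness given by the boundary of a 2-simplex on $\{p,q,r\}$: all three edges are present but $\{p,q,r\}$ is not a face, so $\{r\} \in \lk(p) \cap \lk(q)$ while $\{r\} \notin \lk(\{p,q\})$, showing that the reverse inclusion genuinely requires flagness.
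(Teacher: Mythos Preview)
Your argument is correct: the easy inclusion $\lk_\Delta(P\cup Q)\subseteq \lk_\Delta(P)\cap\lk_\Delta(Q)$ is handled by heredity, and for the reverse inclusion the three faces $G\cup P$, $G\cup Q$, $P\cup Q$ indeed cover every $2$-element subset of $G\cup P\cup Q$, so flagness gives $G\cup P\cup Q\in\Delta$; the boundary-of-a-triangle counterexample is the right witness that flagness is needed.

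There is nothing to compare against: the paper states this proposition in the background section without proof, treating it as a standard fact (it is later invoked in the line ``Since $\Delta$ is flag, we have that $\lk_{\lk_\Delta(p)}(q) = \lk_\Delta(p) \cap \lk_\Delta(q) = \lk_\Delta(p, q)$\ldots''). Your write-up simply supplies the omitted verification, and it is the natural one.
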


	\begin{prop}
		Given a flag homology sphere $\Delta$, a link $\lk_\Delta(e)$ over an edge $e \in \Delta$ is also a flag homology sphere. \\
	\end{prop}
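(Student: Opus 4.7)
The plan is to prove the statement in two independent parts: first the flag condition on $\lk_\Delta(e)$, and then the (generalized) homology sphere condition. Both parts follow more or less directly once the link is unpacked according to the definition, so the main work is to organize the combinatorics cleanly.

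For the flag part, I would start from the characterization that a face $G \subset V(\lk_\Delta(e))$ of the link is precisely a set of vertices with $G \cup e \in \Delta$ and $G \cap e = \emptyset$. Suppose $A \subset V(\lk_\Delta(e))$ has the property that every pair $\{x,y\} \subset A$ lies in $\lk_\Delta(e)$, i.e.\ every such pair satisfies $\{x,y\} \cup e \in \Delta$. I want to conclude $A \cup e \in \Delta$. Write $e = \{p,q\}$. Then for any two distinct $x,y \in A$, all six pairs inside $\{x,y,p,q\}$ are edges of $\Delta$: the pairs $\{x,p\},\{x,q\},\{y,p\},\{y,q\}$ come from $\{x\} \cup e, \{y\} \cup e \in \Delta$, the pair $\{p,q\} = e$ is an edge by hypothesis, and $\{x,y\}$ is an edge because $\{x,y\} \cup e \in \Delta$. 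More generally, every pair of vertices in $A \cup e$ is an edge of $\Delta$, so by the flag property of $\Delta$ we conclude $A \cup e \in \Delta$, i.e.\ $A \in \lk_\Delta(e)$. This shows that every minimal nonface of $\lk_\Delta(e)$ has size $2$. The same argument shows, more generally, that links of faces in flag complexes are flag, which is the natural inductive statement to carry.

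For the homology sphere part, the key observation is that the link operation is transitive: for $F \in \Delta$ and $G \in \lk_\Delta(F)$ one has $\lk_{\lk_\Delta(F)}(G) = \lk_\Delta(F \cup G)$. If ``flag homology sphere'' is taken in the sense of Gal (a simplicial complex of dimension $d-1$ whose geometric realization, together with the realization of every link, has the reduced homology of a sphere of the appropriate dimension), this transitivity immediately shows that $\lk_\Delta(e)$ is again a homology sphere of the correct dimension, namely $d-3$. If instead one works with the equivalent topological characterization, one uses the standard fact that for a simplicial sphere the link of a face is a sphere of complementary dimension (a consequence of Alexander duality applied to the star/antistar decomposition).

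I do not expect a genuine obstacle here; the statement is essentially a packaging of two well-known facts. The only mildly delicate point is making sure the flag argument is run at the level of $A \cup e$ rather than $A$ alone, which uses that $e$ itself is an edge of $\Delta$; the previous proposition on intersections of links $\lk_\Delta(p) \cap \lk_\Delta(q) = \lk_\Delta(p,q)$ can be invoked if one prefers to derive $\lk_\Delta(e)$ as the intersection of the two vertex links and inherit flagness from them individually.
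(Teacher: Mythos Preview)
Your proposal is correct and aligns with the paper's own treatment, which simply records that both flagness and being a homology sphere are preserved under taking links over vertices (citing Gal and Stanley), so that the edge link inherits both properties by iterating once. You spell out the flag argument directly at the level of $A \cup e$ rather than citing, but you also note the reduction to vertex links via $\lk_\Delta(p)\cap\lk_\Delta(q)=\lk_\Delta(p,q)$, which is exactly the paper's route; there is no substantive difference.
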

	
	Flagness and being a homology sphere is preserved under taking links over vertices (Definition 1.2.1 on p. 271 of \cite{Gal} and Theorem 5.1 on p. 65 of \cite{St}). \\
	
	\color{black}
	
	The main invariant we are concerned with is the gamma vector, which is defined below. \\
	
	\begin{defn} (p. 272 of \cite{Gal}) \\
		The \textbf{gamma vector} of a reciprocal/palindromic polynomial $h(x)$ is the unique vector $\gamma = (\gamma_0, \ldots, \gamma_{ \frac{d}{2} })$ such that \[ h(x) = \sum_{i = 0}^{ \lfloor \frac{d}{2} \rfloor } \gamma_i x^i (x + 1)^{d - 2i}. \]
		
		In practice, we will take $d$ to be even since we're studying odd-dimensional flag spheres. \\
	\end{defn}
	
	In a geometric context, it was studied by Gal \cite{Gal} who was studying real-rootedness properties of $h$-vectors of flag spheres (with some motivation on p. 277 of \cite{Gal}). The positivity of this vector lies somewhere between unimodality and real-rootedness. It has also had interesting applications in many different parts of algebraic combinatorics (which is surveyed in \cite{Athgam}). \\
	
	Finally, we list some properties of Cohen--Macaulay simplicial complexes which we may use later. \\
	
	\begin{thm} (Theorem 5.10 on p. 35, proofs of Lemma 2.4 and Theorem 2.5 on p. 81 -- 82 and p. 82 -- 83, Theorem 4.5 on p. 98 -- 99 of \cite{St}) \label{basissoc} \\
		\begin{enumerate}
			\item (Theorem 5.10 on p. 35 of \cite{St}) \\
			Let $R$ be an $\mathbb{N}^m$-graded $k$-algebra and $M$ be a $\mathbb{Z}^m$-graded $R$-module. Suppose that $M$ is a $(d - 1)$-dimensional Cohen--Macaulay $R$-module (Definition 5.8 on p. 35 of \cite{St}), with a homogeneous system of parameters $\theta = (\theta_1, \ldots, \theta_d)$ (i.e. homogeneous elements forming a regular sequence). Let $\eta_1, \ldots, \eta_s$ be a collection of homogeneous forms in $M$. \\
			
			This has an analogue to the existence of monomials $\eta_j = x^{H_j}$ such that $\eta_1, \ldots, \eta_s$ forms a basis for $k[T]$ as a $k[\theta]$-module if $T$ is a Cohen-Macaulay simplicial complex (a sort of graded Noether normalization -- see Theorem 1.5.17 on p. 37 -- 38 of \cite{BH}, Theorem 5.10 on p. 35 and application in Theorem 2.5 on p. 82 of \cite{St}). \\
			
			Then \[ M = \bigoplus_{i = 1}^s \eta_i k[\theta] \] if and only if $\eta_1, \ldots, \eta_s$ is a $k$-basis for $M/\theta M$. \\
			
			For such a choice of $\theta$ and $\eta$, it follows that \[ F(M, \lambda) = \frac{ \sum_{i = 1}^s \lambda^{\deg \eta_i} }{ \prod_{j = 1}^d (1 - \deg \lambda^{\deg \theta_j}) }. \]
			
			\item (Lemma 2.4 and Theorem 2.5 on p. 81 -- 83, Theorem 4.5 on p. 98 of \cite{St}) \\ 
			Let $\Delta$ be a $(d - 1)$-dimensional Cohen--Macaulay simplicial complex. Setting $M = k[\Delta]$ and $\theta = (\theta_1, \ldots, \theta_d)$ to be a linear system of parameters, Part 1 implies that $k[\Delta]$ is a free module over $k[\theta]$ with basis $\eta_1, \ldots, \eta_s$ if and only if $\eta_1, \ldots, \eta_s$ is a $k$-basis for the Artinian reduction $A^\cdot(\Delta) \coloneq k[\Delta]/\theta k[\Delta]$ of the Stanley--Reisner ring $k[\Delta]$. \\
			
			When $\Delta$ is shellable, we can set $\eta_i = x^{r(F_i)}$ from the restrictions $r(F_i) = \Delta_i \setminus \Delta_{i - 1}$ of the facets $F_i$ of $\Delta$ (p. 79 of \cite{St}) yielding a partition \[ \Delta = [r(F_1), F_1] \cup \cdots \cup [r(F_s), F_s]. \]
			Note that the proof of this is based on showing that $x^{r(F_i)}$ is in the socle of $A^\cdot(\Delta)$ (p. 50 of \cite{St}). \\
			
			In the proof of Theorem 4.5 on p. 98 of \cite{St}, it is claimed that we can take $\eta_1, \ldots, \eta_s$ in Part 1 to be some set of monomials in the vertices of $\Delta$ and the balanced condition only seems to be used in the choice of linear system of parameters $\theta$ involved there. The reason for the use of monomials seems to be that the monomials $x^F$ for faces $F \in \Delta$ span the Artinian reduction $A^\cdot(\Delta)$ as a vector space (Lemma 2.4 on p. 81 of \cite{St}) and a maximal linearly independent subset of them forms a basis. \\
			
			\item (Lemma 2.4 on p. 81 of \cite{St}) \\
			Let $k[\Delta]$ be the Stanley--Reisner ring of Krull dimension $d$, and let $\theta_1, \ldots, \theta_d \in k[\Delta]_1$. Then $\theta_1, \ldots, \theta_d$ is a linear system of parameters for $k[\Delta]$ if and only if for every face $F \in \Delta$ (equivalently for every facet $F \in \Delta$), the restrictions $\theta_1|_F, \ldots, \theta_d|_F$ span a vector space of dimension equal to $F$. In particular, this means that a linear system of parameters for a simplicial complex $\Delta$ can be used to form one for subcomplexes via restriction of the linear forms to vertices that appear in the subcomplex. \\
			
		\end{enumerate}
	\end{thm}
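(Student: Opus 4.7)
The plan is to dispatch Theorem~\ref{basissoc} by handling its three parts sequentially, each as an application of standard graded commutative algebra following Stanley. For Part 1, I would start from graded Nakayama: if $\eta_1,\ldots,\eta_s$ is a $k$-basis of $M/\theta M$, then the $\eta_i$ generate $M$ as a $k[\theta]$-module, producing a graded surjection $\phi\colon F := \bigoplus_{i=1}^s k[\theta](-\deg\eta_i) \twoheadrightarrow M$. Since $\theta$ is a homogeneous regular sequence on the Cohen--Macaulay module $M$, an iterated quotient gives the Hilbert series identity
\[ F(M,\lambda) \;=\; \frac{F(M/\theta M,\lambda)}{\prod_{j=1}^d (1-\lambda^{\deg\theta_j})} \;=\; F(F,\lambda), \]
so $\phi$ is a surjection of finitely generated graded modules whose domain and codomain have matching Hilbert series, forcing $\ker\phi = 0$. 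The reverse implication is immediate on reducing modulo $\theta$, and the Hilbert series formula is then a direct consequence of freeness.

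For Part 2, I plan to specialize Part 1 to $M = k[\Delta]$ when $\Delta$ is Cohen--Macaulay, with any linear system of parameters playing the role of $\theta$. For the shellable refinement, I would exploit the partition $\Delta = [r(F_1),F_1] \cup \cdots \cup [r(F_s),F_s]$ induced by a shelling, take $\eta_i = x^{r(F_i)}$, and verify that each $\eta_i$ is a socle element of $A^\cdot(\Delta) := k[\Delta]/\theta k[\Delta]$ by using that appending any vertex $v \in F_i \setminus r(F_i)$ to $r(F_i)$ stays inside the Boolean interval $[r(F_i),F_i]$. Linear independence of the $\bar\eta_i$ in $A^\cdot(\Delta)$ then follows from Hilbert-series accounting. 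The balanced refinement (Stanley's Theorem 4.5) enters only through the choice of $\theta$ as sums of vertices in each color class; once this $\theta$ is fixed, Lemma 2.4 of \cite{St} gives that $A^\cdot(\Delta)$ is spanned by vertex monomials, from which one extracts a monomial basis.

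For Part 3, I would use the standard correspondence between minimal primes of $k[\Delta]$ and facets: each facet $F$ produces a minimal prime $P_F$ with $k[\Delta]/P_F$ the polynomial ring on the vertices of $F$. A set of linear forms $\theta_1,\ldots,\theta_d$ is a linear system of parameters exactly when $k[\Delta]/(\theta)$ is Artinian, and by primary decomposition this is equivalent to the images $\theta_i|_F$ spanning the full degree-one piece of $k[\Delta]/P_F$ for every facet $F$ --- i.e.\ the dimension count asserted in the theorem. Passage to a subcomplex is then transparent, since the facets of a subcomplex form a subfamily of (or refine) those of $\Delta$, and restricting a linear form to a subset of vertices yields a linear form.

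The main obstacle, should one wish to go beyond a direct citation of \cite{St}, lies in the balanced refinement of Part 2: showing that the basis $\eta_1,\ldots,\eta_s$ can be chosen as \emph{vertex monomials} when $\Delta$ is merely Cohen--Macaulay and balanced (no shelling assumed) requires unpacking the monomial spanning set from Lemma 2.4 and verifying that a linearly independent subset can indeed be extracted inside the socle. This is precisely the interpretive point the author flags in the statement of Part 2, and a self-contained argument would need to separate the role of the balanced l.s.o.p.\ from the role of monomial representatives, since the two are conflated in Stanley's proof of Theorem 4.5.
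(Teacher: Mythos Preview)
The paper does not supply its own proof of Theorem~\ref{basissoc}: the statement is presented purely as background, with each part attributed by page reference to Stanley's book \cite{St} (and to \cite{BH} for graded Noether normalization). There is no proof environment following the theorem; the paper simply records the results and moves on to Section~2. Your proposal is therefore not competing with any argument in the paper --- it is a sketch of the standard proofs one finds in the cited sources, and as such it is essentially correct and matches what Stanley does.

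One small caution on Part~2: your phrasing ``verify that each $\eta_i$ is a socle element of $A^\cdot(\Delta)$'' is not literally true for every $i$ --- only those $r(F_i)$ of maximal size land in the socle. The paper's own parenthetical (``the proof of this is based on showing that $x^{r(F_i)}$ is in the socle'') is already loose on this point, and you have inherited that looseness; the actual mechanism in Stanley's Theorem~2.5 is an induction along the shelling together with the Hilbert-series count you mention, not a blanket socle claim. This does not affect the overall correctness of your sketch, but if you were to write it out in full you would want to state the inductive step rather than the socle assertion.
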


	\section{$f$-vectors of balanced simplicial complexes and Boolean decompositions}

	\subsection{Boolean decompositions and $f$-vectors of balanced simplicial complexes}

	By work of Caviglia--Constantinescu--Varbaro (Corollary 2.3 on p. 474 of \cite{CCV}), the $h$-vector of any flag Cohen--Macaulay simplicial complex $\Delta$ is equal to the $h$-vector of some Cohen--Macaulay balanced simplicial complex $T$. Combining this with work of Bj\"orner--Frankl--Stanley ($a = (1, \ldots, 1)$ case of Theorem 1 on p. 23, Section 6.4 on p. 33 of \cite{BFS}), it is equal to the $f$-vector of a simplicial complex $\Gamma$ whose 1-skeleton is $d$-colorable. If $h_d(\Delta) \ne 0$ (e.g. $\Delta$ a flag sphere), then $\Gamma$ is a $(d - 1)$-dimensional balanced simplicial complex. We record this result below. \\
	
	\begin{thm} (Caviglia--Constantinescu--Varbaro and Bj\"orner--Frankl--Stanley, Corollary 2.3 on p. 474 of \cite{CCV} and $a = (1, \ldots, 1)$ case of Theorem 1 on p. 23, Section 6.4 on p. 33 of \cite{BFS}) \label{flagcmhtof} \\
		The $h$-vector of a $(d - 1)$-dimensional flag Cohen--Macaulay simplicial complex $\Delta$ is the $f$-vector of a simplicial complex $\Gamma$ whose 1-skeleton is $d$-colorable. If $h_d(\Gamma) \ne 0$ (e.g. $\Delta$ a flag sphere), we can take $\Gamma$ to be a $(d - 1)$-dimensional balanced simplicial complex. \\
	\end{thm}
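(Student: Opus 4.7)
The statement is essentially a concatenation of two results cited in the excerpt, so my plan is to carry out the chaining carefully and verify the dimension bookkeeping at the end.

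First I would invoke the Caviglia--Constantinescu--Varbaro result (Corollary 2.3 on p. 474 of \cite{CCV}) to pass from the given flag Cohen--Macaulay complex $\Delta$ to a balanced Cohen--Macaulay simplicial complex $T$ of dimension $d-1$ whose $h$-vector agrees with $h(\Delta)$. This step trades flagness for the balancedness we need later, at the cost of losing any direct control of $\Delta$; since the remaining argument only uses $h(T)=h(\Delta)$, that loss is fine.

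Next I would apply the Björner--Frankl--Stanley result (the specialization $a=(1,\ldots,1)$ of Theorem 1 on p. 23, with the interpretation spelled out in Section 6.4 on p. 33 of \cite{BFS}) to $T$. The content I need is that for a $(d-1)$-dimensional balanced Cohen--Macaulay complex, the $h$-vector can be realized as the $f$-vector of a simplicial complex $\Gamma$ on a vertex set partitioned into $d$ color classes, in such a way that every face of $\Gamma$ uses at most one vertex from each color class. Concretely, one obtains such a $\Gamma$ from a shelling-type decomposition of $T$ (or from a basis of monomials for the Artinian reduction of $k[T]$ with respect to the standard coloring l.s.o.p., as in Theorem \ref{basissoc}): monomials supported on a subset of colors of size $i$ contribute to $f_{i-1}(\Gamma)$, which matches $h_i(T)$. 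In particular, the $1$-skeleton of $\Gamma$ is properly $d$-colored.

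Finally I would handle the last sentence. Since $f_i(\Gamma) = h_{i+1}(T) = h_{i+1}(\Delta)$, the hypothesis $h_d(\Delta)\neq 0$ gives $f_{d-1}(\Gamma)\neq 0$, so $\Gamma$ has a facet of size $d$; combined with the coloring constraint (each facet uses each color at most once) this forces every such facet to use each color exactly once. That is precisely the definition of $\Gamma$ being a $(d-1)$-dimensional balanced simplicial complex, completing the proof.

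The only real obstacle is bookkeeping: making sure the specialization $a=(1,\ldots,1)$ in \cite{BFS} really does produce a proper $d$-coloring of the $1$-skeleton of $\Gamma$ (as opposed to merely a coloring of facets), and checking that the $f$-to-$h$ index shift lines up so that $h_d(\Delta)\neq 0$ translates to $\dim\Gamma=d-1$. Both are routine once the citations are read with the conventions of the present paper in mind, and no new ideas are needed beyond the combination of \cite{CCV} and \cite{BFS}.
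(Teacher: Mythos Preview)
Your proposal is correct and follows essentially the same approach as the paper: the paper's argument is precisely the paragraph preceding the theorem, which chains CCV to pass from the flag CM complex $\Delta$ to a balanced CM complex $T$ with the same $h$-vector, then applies the $a=(1,\ldots,1)$ case of BFS to realize $h(T)$ as $f(\Gamma)$ for a complex with $d$-colorable $1$-skeleton, and finally observes that $h_d(\Delta)\ne 0$ forces $\dim\Gamma=d-1$. Your write-up is somewhat more explicit about the mechanics of the BFS step and the dimension bookkeeping, but no different in substance.
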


	Keeping this in mind, we can look at the proposed interpretation of the gamma vector of a flag sphere. \\
	
	\begin{prop} (Nevo--Petersen--Tenner, proof of Proposition 6.2 on p. 1377 of \cite{NPT}) \label{gamdec} \\
		Let $\Delta$ be a flag sphere and $\Gamma$ be a simplicial complex such that $h(\Delta) = f(\Gamma)$ (which we can take to be balanced). If there is a $\left( \frac{d}{2} - 1 \right)$-dimensional simplicial complex $S$ of dimension $\le \frac{d}{2} - 1$ such that \[ \Gamma =   \{ F \cup G : F \in S, G \in 2^{[d - 2|F|]} \} \] with $[d]$ a vertex set disjoint from $S$, then $\gamma(\Delta) = f(S)$. \\
	\end{prop}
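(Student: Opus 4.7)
The plan is a direct coefficient comparison. The hypothesis provides an explicit Boolean-style description of every face of $\Gamma$ as $F \cup G$, with $F \in S$ a face of the auxiliary complex and $G$ a subset of the vertex set $[d - 2|F|]$ that is disjoint from $V(S)$, and the conclusion asks for the coefficients of $h(\Delta)(x)$ expressed in the gamma basis. So everything should reduce to counting faces of $\Gamma$ and then reading off a polynomial identity, with the uniqueness of the gamma expansion playing the decisive role at the end.

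First I would count faces. Since $V(S) \cap [d] = \emptyset$ by hypothesis, every face of $\Gamma$ admits a unique decomposition $F \cup G$ with $F \in S$ and $G \subseteq [d - 2|F|]$: one recovers $F$ as the intersection with $V(S)$ and $G$ as the complement. Grouping the faces of $\Gamma$ by $j = |F|$ then yields
\[ f_{k-1}(\Gamma) \;=\; \sum_{j \ge 0} f_{j-1}(S)\,\binom{d-2j}{\,k-j\,}, \]
where $f_{-1}(S) = 1$ accounts for the contribution $F = \emptyset$.

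Next, I would translate this into a polynomial identity. Using the hypothesis $f(\Gamma) = h(\Delta)$, so that $h_k(\Delta) = f_{k-1}(\Gamma)$, and swapping the order of summation gives
\[ h(\Delta)(x) \;=\; \sum_{k \ge 0} f_{k-1}(\Gamma)\, x^k \;=\; \sum_{j \ge 0} f_{j-1}(S)\, x^j \sum_{m \ge 0} \binom{d-2j}{m} x^m \;=\; \sum_{j=0}^{d/2} f_{j-1}(S)\, x^j (1+x)^{d-2j}, \]
where the truncation at $j = d/2$ uses $\dim S \le \frac{d}{2} - 1$.

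The final step is uniqueness of the gamma expansion. Since $\Delta$ is a flag sphere, $h(\Delta)(x)$ is palindromic of degree $d$ by Dehn--Sommerville. The family $\{x^i(1+x)^{d-2i}\}_{i=0}^{d/2}$ has distinct lowest-degree terms $x^i$, hence is linearly independent, and it has the correct cardinality to span the $\left(\tfrac{d}{2}+1\right)$-dimensional space of palindromic polynomials of degree $d$. Comparing the identity above against the defining expansion $h(\Delta)(x) = \sum_i \gamma_i(\Delta)\, x^i(1+x)^{d-2i}$ in this basis therefore forces $\gamma_i(\Delta) = f_{i-1}(S)$ for every $i$, i.e.\ $\gamma(\Delta) = f(S)$. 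There is essentially no serious obstacle; the only place that needs a moment of care is the uniqueness of the decomposition $F \cup G$, which rests entirely on the disjointness of $V(S)$ and $[d]$, after which the argument is routine bookkeeping and linear algebra.
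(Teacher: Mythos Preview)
Your proof is correct and follows essentially the same route as the paper. The paper does not give a standalone proof of this proposition (it is cited from \cite{NPT}), but the Remark immediately following it sketches exactly your argument: the disjointness of $V(S)$ from $[d]$ yields the face count $f_{i-1}(\Gamma) = h_i(\Delta) = \sum_{j=0}^i f_{j-1}(S)\binom{d-2j}{i-j}$, and then the fact that $\{x^i(x+1)^{d-2i}\}_{0 \le i \le d/2}$ is a basis for palindromic polynomials of degree $d$ (phrased there as an ``invertible linear transformation'') forces $\gamma_j(\Delta) = f_{j-1}(S)$.
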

	
	This is reminiscent of the construction of a balanced shellable simplicial complex $\mathcal{C}(\Gamma)$ such that $h(\mathcal{C}(\Gamma)) = f(\Gamma)$ on p. 30 -- 32 of \cite{BFS}, which fills in missing colors of each face in a balanced simplicial complex. This decomposition can be considered as a sort of ``partial filling''. \\
	
	Another instance where a coloring complex was associated to the gamma vector of a flag sphere was our earlier work on Tchebyshev subdivisions and $f$-vectors of flag spheres (signed unused coloring complexes from \cite{Pflagdec}). \\
	

	\begin{rem} \textbf{(Uniqueness and vanishing conditions)} ~\\
		\vspace{-3mm}
		\begin{enumerate}
			\item Recall that the gamma vector $\gamma = (\gamma_0, \ldots, \gamma_{\frac{d}{2}})$ is uniquely defined for each reciprocal/palindromic polynomial $h$ of degree $d$. It is defined as the coefficients $\gamma_i$ such that \[ h(x) = \sum_{i = 0}^{ \frac{d}{2} } \gamma_i x^i (x + 1)^{ d - 2i }. \]
			
			The polynomials $x^i (x + 1)^{ d - 2i }$ for $0 \le i \le \frac{d}{2}$ give basis for reciprocal/palindromic polynomials of degree $d$. Since the vertex set of $S$ is disjoint from $[d]$ in the decomposition above, it yields the expansion \[ f_{i - 1}(\Gamma) = h_i(\Delta) = \sum_{ j = 0 }^i \gamma_j \binom{d - 2j}{i - j}. \] This gives an invertible linear transformation writing the $h$-vector components as a linear combination of those of the gamma vector. Since the $h_i$ for $0 \le i \le \frac{d}{2}$ and $\gamma_j$ yield basis elements for the palindromic/reciprocal polynomials of degree $d$, the $\gamma_j$ above must be the same as the $\gamma_i$ in the first expansion. \\

			\item  Since the only term of degree $d$ is in the index $i = 0$, we have that the leading coefficient $h_d = h_0$ is given by $\gamma_0$. In particular, the leading coefficient of $h(x)$ is nonzero if and only if $\gamma_0 \ne 0$. In our setting, we will have $h_0 = h_d = 1$ and $\gamma_0 = 1$. \\
			
			By work of D'Al\`i--Juhnke-Kubitzke--K\"ohne--Venturello \cite{DJKKV}, there is a regime of the Erd\H{o}s--R\'enyi model for random graphs where the gamma vector associated the boundary of the symmetric edge polytope of $G$ asymptotically almost surely has $\gamma_\ell = 0$ for all $\ell \ge 1$ (Theorem B on p. 489 of \cite{DJKKV}). In the Boolean decomposition, the $S$ has $\dim S \le \frac{d}{2} - 1$ and this implies that there are examples where its dimension is strictly smaller than $\frac{d}{2} - 1$. \\
		\end{enumerate}

	\end{rem}
	
	\subsection{1-dimensional case and local to global constructions}
	

	We will start thinking about Boolean decompositions and flag spheres by starting out with the 1-dimensional example. Afterwards, we will use a local-global construction to carry out an induction on $k$ such that the dimension of the odd-dimensional flag sphere is $2k - 1$. Note that the inductive step will also make use of an interpretation of the 1-dimensional example. \\

	\begin{exmp} \textbf{(1-dimensional case and a color palette) \\} \label{dim1case}
		In the 1-dimensional case ($d = 2$), the flag spheres come from $n$-gons with $n \ge 4$. Since $f_0 = n$ and $f_1 = n$, this means that $h_0(\Delta) = 1$, $h_1(\Delta) = n - 2$, and $h_2 = 1$. Substituting this into the gamma vector definition, we have that $\gamma_0 = 1$ and $\gamma_1 = n - 4$. \\
		

		Using the information above, we can consider how the gamma vector is related to a Boolean decomposition. The 1-dimensional simplicial complex $\Gamma$ consisting of $n - 2$ vertices with a single pair of vertices among them connected by an edge has $h(\Delta) = f(\Gamma)$. Label the vertices connected by an edge by $x_1$ and $x_2$ and the remaining $n - 4$ disjoint vertices by $x_3, \ldots, x_{n - 2}$. In terms of the Boolean decomposition, we can take the simplicial complex $S$ to be $0$-dimensional simplicial complex coming from the $n - 4$ disjoint vertices $x_3, \ldots, x_{n - 2}$. Taking the empty face $\emptyset \in S$ and $G \in 2^{[2]}$ corresponds to the faces of $\Gamma$ contained in the edge $\{ x_1, x_2 \} \in \Gamma$. If we take a vertex $x_i$ with $3 \le i \le n - 2$ contained in $S$, we have an element of $G \in 2^{[0]} = \emptyset$. This corresponds to the $n - 4$ individual vertices $x_i$ with $3 \le i \le n - 2$. \\
		
		Note that the 1-dimensional simplicial complex $\Gamma$ above (which uses $d = 2$) is balanced. If we take the interpretation above and use the $(k - 1)$-dimensional faces of $S$ to be the subcollection of $(k - 1)$-dimensional faces of $\Gamma$ that use the last $2k$ colors $d - 2k + 1, \ldots, d$, our decomposition means that the possible colors of $x_3, \ldots, x_{n - 2}$ can be any subset of $[2]$ and the vertices $x_1$ and $x_2$ (which are connected by an edge in $\Gamma$) formally correspond to colors. The former is consistent with none of the vertices $x_3, \ldots, x_{n - 2}$ being connected to each other by edges. In other words, using either of the two colors for each of the vertices $x_3, \ldots, x_{n - 2}$ would be compatible with the coloring interpretation of the Boolean decomposition. \\

		In general, the construction of a Boolean decomposition of a balanced $(d - 1)$-dimensional simplicial complex $\Gamma$ seems to involve a simplicial complex $S$ of dimension at most $\left( \frac{d}{2} - 1 \right)$ whose 1-skeleton is $d$-colorable. By a result of Bj\"orner--Frankl--Stanley ($a = (1, \ldots, 1)$ case of Theorem 1 on p. 23 and Section 6.4 on p. 33 of \cite{BFS}), this is equivalent to having $f(S) = h(B)$ for some $(d - 1)$-dimensional balanced Cohen--Macaulay (or shellable) simplicial complex $B$ with $h_i(B) = 0$ for $\frac{d}{2} + 1 \le i \le d$. \\
		
	\end{exmp}
	
	
	To construct the Boolean decomposition of a fixed balanced simplicial complex $\Gamma$ such that $f(\Gamma) = h(\Delta)$, we will start by taking the vertex set $[d]$ in the Boolean decomposition to be the vertices of the unique facet of $\Gamma$. Note that the facet is unique since $f_{d - 1}(\Gamma) = h_d(\Delta) = 1$. This is taking a page from the 1-dimensional example above. The simplicial complex $S$ will be taken to be the subcomplex of $\Gamma$ consisting of faces which do \emph{not} use any vertices from $[d]$. We would like to show that $\dim S \le \frac{d}{2} - 1$ and that the $F \cup G$ from the proposed decomposition give all the faces of $\Gamma$. \\
	
	Next, we induct on $k$ such that the dimension of the given flag sphere is $2k - 1$ using a local to global extension result used in the proof of the $g$-conjecture. \\
	
	\begin{lem} \textbf{(Partition of unity)} (Adiprasito, Lemma 3.4 on p. 14 -- 15 of \cite{Adip}) \label{partun} \\
		Let $\Delta$ be a $(d - 1)$-dimensional Cohen--Macaulay simplicial complex in $\mathbb{R}^d$. Then, for every $k < d$, we have a surjection \[ \bigoplus_{p \in V(\Delta)} A^k(\lk_\Delta(p)) \twoheadrightarrow A^k(\Delta). \]
		
		Dually, this is equivalent to an injection \[ A^k(\Delta) \hookrightarrow \bigoplus_{p \in V(\Delta)} A^k(\lk_\Delta(p)). \]
	\end{lem}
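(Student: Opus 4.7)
The plan is to prove the dual injection directly and derive the surjection from it, since restriction-style maps $A^k(\Delta) \to A^k(\lk_\Delta(p))$ are the maps which arise most naturally from the inclusion of links as subcomplexes of $\Delta$.

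For each vertex $p \in V(\Delta)$, the link $\lk_\Delta(p)$ is a subcomplex of $\Delta$, so there is a surjection of face rings $k[\Delta] \twoheadrightarrow k[\lk_\Delta(p)]$ sending $x_v$ to itself if $v \in V(\lk_\Delta(p))$ and to zero otherwise. By Theorem \ref{basissoc}(3), a linear system of parameters $\theta = (\theta_1, \ldots, \theta_d)$ for $k[\Delta]$ restricts to a sequence on $V(\lk_\Delta(p))$ that still spans a subspace of the correct dimension on each facet of $\lk_\Delta(p)$, so (after discarding any redundant form) it produces a linear system of parameters for $k[\lk_\Delta(p)]$. Passing to Artinian reductions yields a degree-preserving surjection $\pi_p: A^k(\Delta) \twoheadrightarrow A^k(\lk_\Delta(p))$, and I assemble these into a single map $\pi = \bigoplus_p \pi_p : A^k(\Delta) \to \bigoplus_{p \in V(\Delta)} A^k(\lk_\Delta(p))$.

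The heart of the proof is showing $\pi$ is injective for $k < d$. An element $\alpha \in \ker \pi_p$ has every monomial representative supported on vertices outside $V(\lk_\Delta(p)) \cup \{p\}$ once $\theta$ is used to eliminate terms. If $\alpha$ lies in $\bigcap_p \ker \pi_p$, I would argue, using the monomial basis provided by Theorem \ref{basissoc}(1)--(2) and the Cohen--Macaulay property, that $\alpha$ can be pushed into the socle by multiplying by a suitable product of vertex classes; since the Cohen--Macaulay condition places the socle in degree $d$, the bound $k < d$ leaves no room and forces $\alpha = 0$. Concretely I would set this up inductively on the number of vertices, using a short exact sequence of Stanley--Reisner modules relating $\Delta$, its deletion $\Delta \setminus p$, and $\st_\Delta(p) = p * \lk_\Delta(p)$, applied together with the long exact sequence in local cohomology granted by Cohen--Macaulayness.

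Finally, I would dualize: since each $A^\cdot(\Delta)$ and $A^\cdot(\lk_\Delta(p))$ is finite-dimensional with pairing structure compatible with $\pi_p$ (granted by Theorem \ref{basissoc}), the graded transpose of $\pi$ is the claimed surjection $\bigoplus_p A^k(\lk_\Delta(p)) \twoheadrightarrow A^k(\Delta)$. The main obstacle will be the injectivity step: one has to track how non-face relations in the links, which are strictly stronger than those of $\Delta$, interact with the linear system of parameters, and exploit the inequality $k < d$ in a way that genuinely fails at $k = d$ (where the top socle of $A^\cdot(\Delta)$ is clearly not detected by any single restriction to a link of dimension $d-2$).
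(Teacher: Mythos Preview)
The paper does not prove this lemma; it is quoted from Adiprasito \cite{Adip} and used as a black box, with only a description of the map (as arising from the \v{C}ech complex for the cover of $\Delta$ by stars, together with the identification $A^k(\St_\Delta(p)) \cong A^k(\lk_\Delta(p))$). So there is no in-paper argument to compare yours against beyond the observation that your restriction maps $\pi_p$ are exactly the maps the paper has in mind for the injection.

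That said, your sketch has a genuine gap at the injectivity step. The assertion that ``the Cohen--Macaulay condition places the socle in degree $d$'' is false for general Cohen--Macaulay complexes: that is a Gorenstein property (e.g.\ for spheres), not a CM one. For instance, two $(d-1)$-simplices glued along a common facet give a CM complex with $h_d = 0$, so the socle of $A^\cdot(\Delta)$ sits strictly below degree $d$. Your proposed mechanism---push $\alpha$ into the socle and use $k<d$---therefore cannot work as stated. Relatedly, your description of $\ker \pi_p$ is not right: a monomial $x_q x_r$ with $q,r \in V(\lk_\Delta(p))$ can lie in $\ker \pi_p$ simply because $\{q,r\}$ is a face of $\Delta$ but not of $\lk_\Delta(p)$, so elements of the kernel need not be supported off the link.

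The dualization step is also underspecified: for a CM complex that is not Gorenstein there is no intrinsic perfect pairing on $A^\cdot(\Delta)$, so ``graded transpose'' does not automatically convert your injection into the claimed surjection in the same degree. In Adiprasito's argument the surjection is the primary statement, obtained directly from the cover $\Delta = \bigcup_p \St_\Delta(p)$ and the cone lemma, and the injection is then read off via the appropriate duality available in that setting. If you want to salvage your approach, you should either prove the surjection first (every class in $A^k(\Delta)$ for $k\ge 1$ is hit because it is a sum of monomials each supported in some star) and then invoke the right duality, or replace the socle argument by an honest exact-sequence/\v{C}ech argument that does not assume Gorenstein behaviour.
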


	
	Since $\Delta$ is flag, we have that $\lk_{\lk_\Delta(p)}(q) = \lk_\Delta(p) \cap \lk_\Delta(q) = \lk_\Delta(p, q)$ if $(p, q) \in \Delta$. The inclusion $\lk_\Delta(p) \cap \lk_\Delta(q) \subset \lk_\Delta(p, q)$ uses flagness and does not hold for arbitrary simplicial complexes. \\
	
	Given $0 \le k \le d - 2$, this means that we have a composition of injections \[ i : A^k(\Delta) \xhookrightarrow{\varphi} \bigoplus_{p \in V(\Delta)} A^k(\lk_\Delta(p)) \xhookrightarrow{\bigoplus_{p \in V(\Delta)} \psi_p} \bigoplus_{p \in V(\Delta)} \left( \bigoplus_{q \in V(\lk_\Delta(p))} A^k(\lk_\Delta(p, q)) \right) \cong \left( \bigoplus_{ \substack{ e \in \Delta \\ |e| = 2 } } A^k(\lk_\Delta(e)) \right)^{\oplus 2}. \]
	
	Note that the $\lk_\Delta(e)$-component of the partition of unity map is a sort of projection/restriction map which is basically the identity map on $x^H$ for each degree $k$ element of some fixed basis of $A^\cdot(\Delta)$ when $H \in \St_\Delta(e)$ (noting that $A^k(\lk_\Delta(e)) \cong A^k(\St_\Delta(e))$) and $0$ otherwise (see Lemma 2.4 on p. 81 of \cite{St}). This is because the partition of unity map is constructed out of the Cech complex coming from restrictions to stars of faces of $\Delta$. Also, an application of restrictions to subcomplexes of decompositions of Stanley--Reisner rings of simplicial complexes as free modules over $k[\theta]$ is given on p. 98 of \cite{St}. \\
	
	Using the fact that $h$-vectors of flag spheres $\Delta$ are $f$-vectors of balanced simplicial complexes $\Gamma$ (Theorem \ref{flagcmhtof}), we can give an interpretation of the decomposition above. Since $h(\Delta) = f(\Gamma)$, we note that $f_{k - 1}(\Gamma) = h_k(\Delta) = \dim A^k(\Delta)$. In particular, this means that $\dim B^k(\Gamma) = \dim A^k(\Delta)$ as vector spaces, where \[ B^\cdot(\Gamma) \coloneq \frac{k[\Gamma]}{(x_1^2, \ldots, x_n^2)} \] and $B^k(\Gamma)$ denotes the degree $k$ elements of $B^\cdot(\Gamma)$. \\ 
	
	Note that the construction of a balanced shellable simplicial complex $\mathcal{C}(\Gamma)$ such that $h(\mathcal{C}(\Gamma)) = f(\Gamma)$ on p. 30 -- 32 of \cite{BFS} induces a shelling of $\mathcal{C}(\Gamma)$ where the $(k - 1)$-dimensional faces $H \in \Gamma$ induce basis elements $x^H$ of $A^k(\mathcal{C}(\Gamma)) \cong A^k(\Delta)$ (from $h_k(\mathcal{C}(\Gamma)) = f_{k - 1}(\Gamma) = h_k(\Delta)$). The reason is that the faces $H \in \Gamma$ are restrictions (p. 79 of \cite{St}) of facets of $\mathcal{C}(\Gamma)$, which are constructed by inserting new vertices corresponding to the colors in $[d]$ which are unused by a given face of $\Gamma$. \\
	
	For each edge $e \in \Delta$, let $\Gamma_e$ be a balanced simplicial complex such that $f(\Gamma_e) = h(\Gamma_e)$. This means that the partition of unity maps compose to the following injection:
	
	\[ i : B^k(\Gamma) \hookrightarrow \left( \bigoplus_{ \substack{ e \in \Delta \\ |e| = 2 } } B^k(\Gamma_e) \right)^{\oplus 2} \]

	Aside from this, we will say a bit more about how we put together the Boolean decompositions of $\Gamma_e$. The set $[d - 2]$ corresponds to a $(d - 3)$-face of $\Gamma_e$ (the unique facet of $\Gamma_e)$. If we use compression complexes (see p. 263 of \cite{Zie} and a colored version on p. 1366 -- 1367 of \cite{NPT}) to form $\Gamma$, the image of the component $i_e : B^k(\Gamma) \longrightarrow B^k(\Gamma_e)^{\oplus 2}$ associated $e \in \Delta$ is defined as follows:

	\begin{itemize}
		\item  If we take $\Gamma$ to be the compression complex for $h(\Delta)$, its $(k - 1)$-dimensional faces are the first $h_k(\Delta)$ elements of $\binom{ \mathbb{N} }{k}$ (in reverse lexicographical order). This can be used to $\Gamma_e$ interpret as a subcomplex of $\Gamma$. Note that other vertex orderings can also be used. 
		
		\item Since $f(\Gamma) = h(\Delta)$, we can use this to fix a bijection between the $(k - 1)$-dimensional faces of $\Gamma$ and basis elements $x^H$ of $A^k(\Delta)$. 
		
		\item The $e$-component of the map $i$  is the identity map if the corresponding face of $\Delta$ belongs to $\lk_\Delta(e)$. Otherwise, it is mapped to 0 (see Theorem \ref{basissoc}).
		
		\item This map can be considered to be starting with precomposition by a sort of indicator function map before applying the projection map to the first $h_k(\lk_\Delta(e))$ components. The projection is analogous to the identity map $x^H \mapsto x^H$ when $H \in \Delta$ being precomposed with the indicator function (whether a face belongs to $\lk_\Delta(e)$ or not) on the original partition of unity map involving Artinian reductions. 
		
		\item The idea is to think about a cover of faces of $\Gamma$ by subcomplexes $\Gamma_e$ where $\Gamma$ and $\Gamma_e$ are analogous to $\Delta$ and $\lk_\Delta(e)$ in the partition of unity map. By a ``cover'', we mean that every face of $\Gamma$ is a face of $\Gamma_e$ for some $e$. To be more precise, consider the surjective form of the partition of unity map and think about the basis elements of $A^k(\lk_\Delta(e))$ as faces of a subcomplex $\Gamma_e \le \Gamma$ corresponding to subcomplexes forming a cover of $\Gamma$. Then, dualize the map to get the injective form.
	\end{itemize}

	While the map is written differently, the point is that this map is the identity map when it is nonzero as in the case of the $e$-component $i_e : A^k(\Delta) \longrightarrow A^k(\lk_\Delta(e))$ of the original partition of unity map. This can be called a projection since $h_k(\lk_\Delta(e)) \le h_k(\Delta)$ for any Cohen--Macaulay simplicial complex $\Delta$. Since $\Gamma$ is a simplicial complex, any $m$-element subset contained in an $k$-element set contained in $\Gamma$ also belongs to $\Gamma$. Note that the unique facet of $\Gamma_e$ is written as $[d - 2]$ for each edge $e \in \Delta$ when we use compression complexes. For enumerative purposes (e.g. what the gamma vector is equal to), what we need from the Boolean decomposition only seems to depend on the following parameters: \\

	\begin{itemize}
		\item The size $D$ of the Boolean part $B$ of the vertex set $V(\Gamma)$ \\
		
		\item $\dim S \le \frac{D}{2} - 1$ \\
		
		\item $B$ being disjoint from $V(S)$ \\
		
		\item The condition $|G| \le D - 2|F|$ for each face $F \cup G$ with $F \in S$ and $G \in 2^B$. \\ 
		
		\item The same thing applies to a correspondence with pairs $(P, Q)$ of ``independent'' sets $P$ and $Q$ satisfying the properties above for $F$ and $G$ where $P$ and $Q$ are not necessarily the same kind of object. \\
	
	\end{itemize}

	By our induction assumption, we have the decomposition \[ \Gamma_e = \{ F_e \cup G_e  : F_e \in S_e, G_e \in 2^{[d - 2|F_e| - 2]} \}, \] where $[d - 2]$ corresponds to the vertices of the unique facet of $\Gamma_e$ and $S_e$ is the subcomplex of $\Gamma_e$ consisting of faces whose vertices do \emph{not} come from $[d - 2]$ and $\dim S_e \le \frac{d}{2} - 2$. The $(d - 2)$-element sets corresponding to $[d - 2]$ for $\Gamma_e$ come from the unique facet of each $\Gamma_e$. In the context of a Boolean decomposition for $\Gamma$, we will insert $[d - 2]$ inside the set $[d]$ whose elements correspond to the vertex set of the unique facet of $\Gamma$. In addition, the condition on the Boolean part $G_e$ is compatible with the proposed Boolean decomposition \[ \Gamma = \{ F \cup G : F \in S, G \in 2^{[d - 2|F|]} \} \] of $\Gamma$ since a subset of $[d - 2|A| - 2]$ is a subset of $[d - 2|A|]$ for any set $A$. Finally, any collection of vertices which is disjoint from $[d]$ is also disjoint from $[d - 2]$ since $[d - 2] \subset [d]$. \\
	
	Note that the partition of unity map applies to $0 \le k \le d - 2$. The generators of these parts consist of faces $H \in \Gamma$ of dimension $\le d - 3$. For these parts, we can make use of the $F_e \cup G_e$ representation of $H$ on the component corresponding to an edge $e \in \Delta$. Since the partition of unity map is still an injection after decreasing the degree to a smaller positive number, there is a unique $F \in \Gamma$ mapping to a given tuple $(F_e)_{ \substack{ e \in \Delta \\ |e| = 2 } }$ and a unique $G \in \Gamma$ mapping to a given tuple $(G_e)_{ \substack{ e \in \Delta \\ |e| = 2 } }$. We can set these $F$ and $G$ to be our initial and Boolean components respectively. If the map is simply trivial, it respects the inclusions. If we use the compression complex for everything, the coordinates on for each edge $e \in \Delta$ are the same. In this case, the inclusion wouldn't be a literal identity map but a correspondence. For example, different $(d - 3)$-dimensional faces of $\Gamma$ would come from faces of different $\Gamma_e$. Alternatively, we can use a different ordering for the compression complex to get something close to a literal inclusion map instead of an injection. \\
	
	As mentioned earlier, the point is that we can think about this as corresponding a cover of $\Gamma$ by subcomplexes $\Gamma_e$. Since a subset of a face $H \in \Gamma_e$ is also a face of $\Gamma_e$, the $F_e$ and $G_e$ are nonzero at the components $e \in \Delta$ where $H$ is nonzero. To take $G_e$ as part of the ``Boolean'' part after the embedding, we can use the compression complexes for $\Gamma_e$ using vertex orders (for reverse lexicographical order) where the first $d - 2$ elements are given by different $(d - 2)$-element vertex subsets of the unique facet $[d] \in \Gamma$ but repeated pairs of vertices in different vertices have the same order. This is similar to excluding colors not used by $\Gamma_e$ covering $\Gamma_e$ when we're given a particular $d$-coloring of the vertices of $\Gamma$. The main thing to note is that $F_e$ and $G_e$ can be taken as is since the induced partition of unity map for the compression complexes is still the identity on components where it is nonzero and 0 on the other ones. Note that there are still $2^{d - 2|F| - 2}$ choices of $G$ for a given $F$ since the map is injective. We can also think about the pairing comment on $(P, Q)$ above. Taking compression complexes, we can take the $(d - 4)$-skeleton of each $\Gamma_e$ (i.e. everything except for the unique facet) to be a face of the $(d - 2)$-skeleton of $\Gamma$ excluding the unique facet of $\Gamma$. In other words, we will take $\Gamma_e$ to be a collection of subcomplexes $\Gamma_e \le \Gamma$ where each face of $\Gamma$ of dimension $\le d - 3$ is a face of $\Gamma_e$ for some $e \in \Delta$. For each such face of $\Gamma$, the injective form of the partition of unity map says that there is a unique tuple of such $\Gamma_e$ associated to each face $H \in \Gamma_e$. \\
	
 	The remaining dimensions of faces of $\Gamma$ are $d - 2$ and $d - 1$. As mentioned previously, there is a unique facet of $\Gamma$ and we set its vertex set to be the set $[d]$ in the Boolean decomposition. The $(d - 2)$-dimensional faces of $\Gamma$ are formed by adding a vertex to a $(d - 3)$-dimensional face of $\Gamma$. Adding a vertex to either of the initial or the Boolean part is compatible with the Boolean decomposition condition for $\Gamma$. If the vertex $p$ is added to the Boolean part to get $G = G_e \cup p$, we have that \[ |G_e| \le d - 2|F_e| - 2 \Longrightarrow |G| = |G_e| + 1 \le d - 2|F_e| - 1 < d - 2|F_e| = d - 2|F| \] since the initial part stays the same. If it is added to the initial part to get $F = F_e \cup p$, we have that \[ |G| = |G_e| \le d - 2|F_e| - 2 = d - 2(|F_e| + 1) = d - 2|F| \] since the Boolean part stays the same. Thus, the size condition on the Boolean part is satisfied in both cases. If a $(d - 3)$-dimensional face of $\Gamma$ belongs to a particular $\Gamma_e$, a new vertex of $\Gamma$ added to it forming a $(d - 2)$-face of $\Gamma$ cannot lie in $\Gamma_e$ since a proper coloring of face of $\Gamma$ must use different colors on each of the vertices. This new vertex must either be $d - 1$ or a new vertex that does not belong to $[d]$. Note that the vertex set of $\Gamma \setminus [d]$ is larger than that of $\Gamma_e \setminus [d - 2]$ unless $f_0(\Delta) \le f_0(\lk_\Delta(e)) + 4 = f_0(\St_\Delta(e)) + 2$. \\

	It remains to show that $\dim S \le \frac{d}{2} - 1$, where $S$ is the dimension of the subcomplex $S$ defined as the (vertex) restriction of $\Gamma$ to faces that are disjoint from the unique facet $[d]$ of $\Gamma$. Given a face of $\Gamma$, we can partition its vertices into a face $F \in S$ and $G \in 2^{[d]}$. For the unique facet $[d]$ of $\Gamma$, the former is the empty set and the latter is all of $[d]$. Now consider the ridges ($(d - 2)$-dimensional faces) of $\Gamma$. We note that removing a single vertex yields a $(d - 3)$-dimensional face of $\Gamma$. If the vertex $p$ is removed from the first part, then we have $F \setminus p \in S_e$ for some edge $e \in \Delta$ and $|F \setminus p| \le \dim S_e + 1 \le \frac{d}{2} - 1 \Longrightarrow |F| \le \frac{d}{2}$. Suppose that the removed vertex $p$ is from the Boolean part. Then, we have that $F \in S_e$ for some edge $e \in \Delta$ and $|F| \le \dim S_e + 1 \le \frac{d}{2} - 2 + 1 = \frac{d}{2} - 1$. 

	\section{Connections to geometric Lefschetz decompositions} 

	The sizes of the faces from the disjoint vertex sets in the Boolean decomposition look similar to the exponents appearing in the Lefschetz decomposition induced by the multiplication by the usual $(1, 1)$-form in $d$-dimensional compact K\"ahler manifolds. We recall the statements below and make some comments on combinatorial applications. \\
	
	\begin{thm} \textbf{(Hard Lefschetz and Lefschetz decomposition, Theorem 14.1.1 on p. 237 -- 238 of \cite{Ara}, Theorem on p. 88 of \cite{CMSP}, p. 122 of \cite{GH}, Theorem 6.3 on p. 138 of \cite{Voi}) \\} \label{HL}
		\begin{enumerate}
			\item \textbf{(Hard Lefschetz) \\} 
				\begin{enumerate}
					\item Let $(X, \omega)$ be a compact K\"ahler manifold of complex dimension $D$. This induces a homomorphism \[ L : H^k(X) \longrightarrow H^{k + 2}(X) \] given by $[\alpha] \mapsto [\omega \wedge \alpha]$. The map \[ L^r : H^{ D - r }(X) \longrightarrow H^{ D + r }(X) \] is an isomorphism for $0 \le r \le D$. \\
					
					\item 
					A combinatorial application of this result is given on p. 77 -- 78 of \cite{St}. Note that the parameter $d$ we have been using corresponds to the complex dimension $D$. Given an integral simplicial $d$-polytope $P$, boundary $\Delta = \partial P$, and the associated toric variety $X_P$, the isomorphism $H^\cdot(X_P) \xrightarrow{\sim} A^\cdot(\Delta)$ halves degrees after an appropriate choice of linear system of parameters for the latter (Theorem 1.3 on p. 77 of \cite{St}). So, we are really considering maps \[ L : A^j(\Delta) \longrightarrow A^{j + 1}(\Delta) \] and \[ L^{2i} : A^{ \frac{d}{2} - i }(\Delta) \xrightarrow{\sim} A^{ \frac{d}{2} + i }(\Delta) \] associated to $r = 2i$ and $0 \le i \le \frac{d}{2}$. \\ 
				\end{enumerate}

			\item \textbf{(Lefschetz decomposition) \\}
				\begin{enumerate}
					\item We define by \[ P^i(X) \coloneq \ker( L^{D - i + 1} : H^i(X) \longrightarrow H^{2D - i + 2}(X) ) \] the \textbf{primitive cohomology} of $X$. Equivalently, we have \[ P^{D - r}(X) \coloneq \ker( L^{r + 1} : H^{D - r}(X) \longrightarrow H^{D + r + 2}(X) ). \]
					
					Then, we have that \[ H^m(X) = \bigoplus_{ k = 0 }^{ \lfloor \frac{m}{2} \rfloor } L^k P^{m - 2k}(X). \] This is called the \textbf{Lefschetz decomposition}. \\
					
					\item In the combinatorial context mentioned in Part 1 with the toric variety $X = X_P$ and $\Delta = \partial P$, this induces a decomposition \[ A^\ell(\Delta) = \bigoplus_{ k = 0 }^\ell L^k P^{\ell - k}(\Delta) \] after setting $m = 2\ell$ and halving degrees in the definition of the primitive decomposition. If $m = d$, we have $\ell = \frac{d}{2}$. \\ 
				\end{enumerate}

		\end{enumerate}
	\end{thm}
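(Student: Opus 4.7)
The plan is to reduce both Hard Lefschetz and the Lefschetz decomposition to the representation theory of $\mathfrak{sl}_2$ acting on $H^\bullet(X, \mathbb{C})$, and then to transport the conclusion to the combinatorial setting via the Danilov--Jurkiewicz presentation of $H^\bullet(X_P)$.

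First, on the space of smooth differential forms on the K\"ahler manifold $(X, \omega)$, I would introduce the Lefschetz operator $L = \omega \wedge (-)$, its formal adjoint $\Lambda$ with respect to the Hodge inner product (equivalently $\Lambda = \star^{-1} L \star$), and the counting operator $H$ that acts as multiplication by $k - D$ on $k$-forms. A pointwise computation in a unitary frame adapted to $\omega$ gives the commutation relations $[H, L] = 2L$, $[H, \Lambda] = -2\Lambda$, and $[L, \Lambda] = H$, so that these operators realize an $\mathfrak{sl}_2$-action on forms. The main analytic obstacle is the pair of K\"ahler identities $[\Lambda, \partial] = -i\bar\partial^\ast$ and $[\Lambda, \bar\partial] = i\partial^\ast$; these imply that $L$ and $\Lambda$ commute with the Laplacian, and hence by the Hodge theorem the $\mathfrak{sl}_2$-action descends to $H^\bullet(X, \mathbb{C})$ through the identification of cohomology with harmonic forms.

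Once the $\mathfrak{sl}_2$-action is present on cohomology, the argument becomes purely algebraic. Finite-dimensional $\mathfrak{sl}_2$-representations decompose into irreducibles whose weight spaces are symmetric about zero, and within each irreducible summand $L^r$ carries the weight $-r$ space isomorphically onto the weight $r$ space. Since $H^k(X)$ sits in weight $k - D$, applying this to the weight $-r$ piece gives the Hard Lefschetz isomorphism $L^r : H^{D-r}(X) \xrightarrow{\sim} H^{D+r}(X)$. The Lefschetz decomposition then follows by identifying the primitive cohomology $P^\bullet(X) = \ker \Lambda$ with the direct sum of highest weight vectors across all irreducible components; equivalently, $P^{D-r}(X) = \ker(L^{r+1}|_{H^{D-r}(X)})$, and summing $L^k$ applied to primitive classes of complementary degree yields the required decomposition $H^m(X) = \bigoplus_{k=0}^{\lfloor m/2 \rfloor} L^k P^{m-2k}(X)$.

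For the combinatorial consequences in parts 1b and 2b, I would invoke that for an integral simplicial polytope $P$ the toric variety $X_P$ is projective, hence K\"ahler. By Theorem 1.3 on p.~77 of \cite{St}, there is a graded ring isomorphism $H^\bullet(X_P, \mathbb{C}) \xrightarrow{\sim} A^\bullet(\Delta)$ halving degrees (so $H^{2j}(X_P) \cong A^j(\Delta)$ while the odd cohomology vanishes), and under this isomorphism cup product with the K\"ahler class corresponds to multiplication by a distinguished degree-one element of $A^\bullet(\Delta)$ coming from the strictly convex piecewise linear function defining $P$. Transporting the Hard Lefschetz isomorphisms and the primitive decomposition across this identification, and substituting $D = d$, $m = 2\ell$, and $r = 2i$, yields $L^{2i} : A^{d/2 - i}(\Delta) \xrightarrow{\sim} A^{d/2 + i}(\Delta)$ and $A^\ell(\Delta) = \bigoplus_{k=0}^\ell L^k P^{\ell - k}(\Delta)$ as stated.
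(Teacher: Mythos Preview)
Your sketch is correct and follows the standard route (the $\mathfrak{sl}_2$-triple $(L,\Lambda,H)$, the K\"ahler identities forcing commutation with the Laplacian, and then finite-dimensional $\mathfrak{sl}_2$-representation theory), which is exactly the argument given in the references the paper cites for this theorem---in particular Voisin \cite{Voi} and Griffiths--Harris \cite{GH}. Note, however, that the paper itself does \emph{not} supply a proof of Theorem~\ref{HL}: it is recorded as a background result with citations, and is used only to motivate the analogy between the Boolean decomposition of $\Gamma$ and the Lefschetz decomposition of $H^d(X)$. So there is no ``paper's own proof'' to compare against; your proposal simply fills in what the paper delegates to the literature, and does so along the same lines as those sources.
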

	
	Roughly speaking, the decomposition \[ \Gamma =   \{ F \cup G : F \in S, G \in 2^{[d - 2|F|]} \} \] has $(k - 1)$-dimensional faces of $S$ parametrizing (actions of) maps $L^k$ and the ``Boolean part'' $G \in 2^{[d - 2k]}$ (using a vertex set $[d]$ disjoint from $S$) giving elements of the primitive decomposition of degree $d - 2k$. The decomposition of $\Gamma$ here is similar to the decomposition \[ H^d(X) = \bigoplus_{k = 0}^{ \frac{d}{2} } L^k P^{d - 2k}(X) \] of $H^d(X)$ for a compact K\"ahler manifold $X$ of complex dimension $d$. It is interesting that the proposed decomposition of $\Gamma$ resembles the Lefschetz decomposition in the original geometric setting rather than the one in the usual application to $h$-vectors of simplicial complexes. \\
	
	In addition, the inductive construction we used indicates that a sort of (graded) Lefschetz map (which may depend on the degree) could involve ``paths of flag spheres'' or collections of flag spheres of a given dimension. Also, a single ``component'' of some objects corresponds to the $h$-vector of a single flag sphere of dimension $d$. If we use the same method, we would need to look at flag spheres which are dimension 2 higher than the ones considered before with the earlier spheres as links over edges and $h_{d + 2} \ne 0$ (so not something trivial like repeated suspensions). \\

	One example where Boolean lattices have been involved with Lefschetz properties in the literature is work of Kim--Rhoades \cite{KR} where on a sort of Lefschetz theory for certain exterior algebras is constructed. It involves a concatenation of two linear forms used to construct an analogue of a $(1, 1)$-form (p. 2915 of \cite{KR}). Other earlier examples of connections between Boolean lattices and Lefschetz properties mentioned there are the ``up operator'' involved in unimodality of the $q$-binomial coefficients (p. 2912 -- 2913 of \cite{KR}) and work of Hara--Watanabe \cite{HW} on the cohomology of products of projective lines and combinatorially constructed Lefschetz elements. Going back to geometry, an example where the cohomology of some combinatorially described object has a decomposition into a ``commutative'' part (a vector space) and an antisymmetric part from a torus is work on De Concini--Procesi on toric arrangements \cite{DCP}. Also, it would be interesting if we can relate these structures to algebraic interpretations of the top gamma vector for a large family of boundaries of flag simplicial polytopes as the signature of a certain toric variety in work of Leung--Reiner \cite{LR}. \\ 
	
	
	Somewhat coincidentally, the Boolean component in our setting is related to possible colorings and the usual linear system of parameters for balanced simplicial complexes seems to be connected to some kind of exterior algebra-like structure. \\

	\begin{prop}
		Let \[ \theta_i \coloneq \sum_{ \substack{ p \in V(\Gamma) \\ \kappa(p) = i } } x_p \] be the usual linear system of parameters for a balanced simplicial complex using a fixed proper coloring $\kappa : V(\Gamma) \longrightarrow [d]$ of the 1-skeleton of $\Gamma$ (Proposition 4.3 on p. 97 of \cite{St}). \\
		
		We have that $\theta_i^2 = 0$ in $B^\cdot(\Gamma)$ and the only nonzero monomials in $\theta_1, \ldots, \theta_d$ are the squarefree monomials $\theta_{i_1} \cdots \theta_{i_r}$ parametrized by $1 \le i_1 < \cdots < i_r \le d$. In particular, they correspond to $r$-element subsets of $[d]$. \\
	\end{prop}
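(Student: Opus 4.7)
The plan is to prove both assertions by a direct expansion of $\theta_i$ together with the observation that in a properly colored balanced complex, any two distinct vertices sharing a color form a non-face. The proof then reduces to a one-line Stanley--Reisner calculation together with a degree-by-degree argument about the exponent vector of a monomial.

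First I would verify that $\theta_i^2 = 0$ in $B^\cdot(\Gamma)$. Expanding,
\[
\theta_i^2 = \sum_{\kappa(p) = i} x_p^2 \;+\; 2 \!\!\sum_{\substack{p \ne q \\ \kappa(p)=\kappa(q)=i}}\!\! x_p x_q .
\]
The diagonal terms vanish in $B^\cdot(\Gamma)$ by definition, since $B^\cdot(\Gamma) = k[\Gamma]/(x_1^2,\ldots,x_n^2)$. For the cross terms, the crucial input is that $\kappa$ is a proper coloring of the $1$-skeleton of $\Gamma$: any two distinct vertices $p,q$ with $\kappa(p) = \kappa(q) = i$ cannot be joined by an edge of $\Gamma$, so $\{p,q\}$ is a minimal non-face and $x_p x_q$ lies in the Stanley--Reisner ideal $I_\Gamma$. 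Hence $x_p x_q = 0$ in $k[\Gamma]$ and a fortiori in $B^\cdot(\Gamma)$, and the whole expression is zero.

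Second, consider any monomial $\theta_1^{a_1} \cdots \theta_d^{a_d}$ with some $a_i \ge 2$. Then $\theta_i^2$ appears as a factor, so by the first step the monomial is zero in $B^\cdot(\Gamma)$. Consequently every nonzero monomial must have $a_i \in \{0,1\}$ for all $i$, i.e.\ be of the form $\theta_{i_1}\cdots\theta_{i_r}$ with $1 \le i_1 < \cdots < i_r \le d$. Such monomials are in bijection with the $r$-element subsets $\{i_1,\ldots,i_r\}$ of $[d]$, which is the promised exterior-algebra-like indexing.

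There is no real obstacle: the only step that uses any structure of $\Gamma$ is the observation that monochromatic pairs are non-faces, and this is immediate from the definition of a balanced complex. I note that the proposition only asserts that nonzero monomials are squarefree, not the converse; whether a given squarefree $\theta_{i_1}\cdots\theta_{i_r}$ is itself nonzero depends on the existence of a face of $\Gamma$ realizing the color set $\{i_1,\ldots,i_r\}$, which is not claimed here and would not be needed for the exterior-algebra analogy the paper is emphasizing.
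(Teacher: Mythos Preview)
Your proof is correct and follows essentially the same approach as the paper: expand $\theta_i^2$, observe that the cross terms $x_p x_q$ vanish already in $k[\Gamma]$ because monochromatic pairs are non-faces, and then kill the diagonal terms using the quotient by $(x_1^2,\ldots,x_n^2)$. Your write-up is in fact more detailed than the paper's, which only records the identity $\theta_i^2 = \sum_{\kappa(p)=i} x_p^2$ in $k[\Gamma]$ and leaves the squarefree consequence implicit; your explicit treatment of the second assertion and your closing remark that squarefreeness is a necessary but not claimed-sufficient condition for nonvanishing are welcome additions.
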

	
	\begin{proof}
		Since no two vertices in the same face of $\Gamma$ have the same color, we have that \[ \theta_i^2 = \sum_{ \substack{ p \in V(\Gamma) \\ \kappa(p) = i } } x_p^2 \] in $k[\Gamma]$. Since squares of variables are quotiented out in $B^\cdot(\Gamma)$, we have that $\theta_i^2 = 0$ in $B^\cdot(\Gamma)$. \\
	\end{proof}


\end{document}